\newtheorem{theorem}{Theorem}[section]
\newtheorem*{theorem*}{Theorem}
\newtheorem{lemma}{Lemma}[section]
\newtheorem{corollary}[theorem]{Corollary}
\newtheorem{remark}[theorem]{Remark}
\def\aint{\frac{\ \ }{\ \ }{\hskip -0.4cm}\int}
\numberwithin{equation}{section}
\begin{document}
\title[Expansion modulus]{\bf Estimates on the modulus of expansion for vector fields solving nonlinear equations}

\author{ Lei Ni}

\address{Department of Mathematics\\
         University of California at San Diego\\
         La Jolla, CA 92093\\}
\email{lni@math.ucsd.edu}

%    \thanks will become a 1st page footnote.

\thanks{This work is partially supported by NSF grant DMS-1105549.}

%    Information for second author

%    General info
%\date{}

\maketitle

\begin{abstract}
 By adapting methods of \cite{AC} we prove a sharp estimate on the expansion modulus of the gradient of the log of the  parabolic kernel  to the Sch\"ordinger operator with convex potential,  which improves an earlier work of Brascamp-Lieb. We also include alternate proofs to the improved log-concavity estimate, and to the fundamental gap theorem of Andrews-Clutterbuck via the  elliptic maximum principle. Some applications of the estimates are also obtained, including a sharp lower bound on the first eigenvalue.
\end{abstract}

\section{Introduction} In \cite{BL}, for any bounded convex domain $\Omega\subset \mathbb R^n$, it was proved that  the first eigenfunction $\phi_0$, of the operator $\mathcal{L}_q=\Delta -q(x)$ with the Dirichlet boundary value, has the property that $-\log \phi_0$ is a convex function on $\Omega$, provided that the potential $q(x)$ is a convex function on $\Omega$.
See also \cite{CS, Kr, Lions, SWYY} for  alternate proofs,  as well as generalizations/applications of this important result, via the maximum principle.

Recently, this log-concavity property of the first eigenfunction was sharpened substantially into the following convexity estimate:
\begin{equation}\label{AC-est1}
-\left(\left(\nabla \log \phi_0\right)(y)-\left(\nabla\log \phi_0\right)(x)\right)\cdot \frac{y-x}{|y-x|} \ge 2\frac{\pi}{D}
 \tan{\left(\frac{\pi |y-x|}{2D}\right)}
\end{equation}
for any pair of points $(x, y)$ in $\Omega$ with $x\ne y$, by Andrews and Clutterbuck \cite{AC}. Here $D$ is the diameter $\operatorname{Diam}(\Omega)$.
This improved convexity (or log-concavity for $\phi_0$) estimate, which is the novel crucial step in the proof to the fundamental gap conjecture,  was proved in \cite{AC} by the study of the precise asymptotics (as $t\to \infty$) to a parabolic equation (cf. Theorem 4.1 and Corollary 4.4 in \cite{AC}) as well as some delicate constructions of barrier functions via the Pr\"ufer transformation.

   Motivated by this result, via studying a modified parabolic equation for the vector fields we prove a  log-concavity estimate for the fundamental solution (with Dirichlet boundary data). The result sharpens a corresponding  result by Brascamp-Lieb  \cite{BL} (Theorem 6.1) on the log-concavity of the fundamental solution. {\it Precisely, if $H(x, y, t)$ is the fundamental solution of $\frac{\partial}{\partial t}-\mathcal{L}_q$ and if $\bar{H}(s,  t)$ is the fundamental solution of $\frac{\partial}{\partial t}-\frac{\partial^2}{\partial s^2}$ centered at $0$ with Dirichlet boundary condition on $[-\frac{D}{2}, \frac{D}{2}]$. Then for any $t>0$, $x\ne y$,
  \begin{equation}\label{BL-imp}
 - \left(\nabla_y \log H (z, y, t)-\nabla_x\log H(z, x, t)\right)\cdot \frac{y-x}{|y-x|}\ge -2\left(\log \bar{H}\right)'\left(\frac{|y-x|}{2}, t\right),
  \end{equation}
  provide that $q(x)$ is convex.
This estimate yields (\ref{AC-est1}) by taking $t\to \infty$. } As in \cite{AC}, even when $q(x)$ is not convex, the proof  can still give  a comparison result with the one-dimensional case. Since $\bar{H}'>0$ on $(0, \frac{D}{2})$, the estimate (\ref{BL-imp}) sharpens the log-concavity assertion of \cite{BL} on the fundamental solution. Our approach  allows a more direct  alternate argument to the estimate (\ref{AC-est1})  via an elliptic maximum principle on vector fields satisfying  nonlinear equations.

We also give an alternate argument, via the elliptic maximum principle, for the theorem of Andrews-Clutterbuck which resolves the fundamental gap conjecture asserting that the gap between the second and the first eigenvalue $\lambda_1-\lambda_0$ is no smaller than $3\frac{\pi^2}{D^2}$. In \cite{AC} the gap of the eigenvalues is used as the exponential decay rate  of the oscillation estimate on the solution to a related parabolic equation.  In our argument the gap appears more directly as the balanced coefficients for the comparison on the modulus of the continuity. The  ideas of studying the oscillation/continuity and expansion modulus  come from \cite{AC}. Our contribution is on adapting these ideas to study the vector field satisfying  elliptic equations and giving  an alternate (probably more direct but less intuitive)  argument with a different perturbation (avoiding the approximation argument involving the Pr\"ufer transformation).
Besides its originality,  the parabolic approach of \cite{AC} allows more general comparison results.

 As an application of (\ref{AC-est1}) we prove that the first eigenvalue $\lambda_0\ge n \left(\frac{\pi}{D}\right)^2+\inf_{\Omega} q$. This also follows from a corollary of (\ref{BL-imp}), which gives the sharp comparison of the decay rates of the fundamental solutions. A result (cf. Corollary \ref{Neumann}, as well as Corollary \ref{Li-Li}) generalizing Payne-Weinberger's estimate, as well as Li-Yau, Zhong-Yang's for the compact manifold with nonnegative Ricci,   on the lower bound of the second Neumann eigenvalue (for the Laplace with a drifting term)  also follows from the alternate proof quite quickly.

\section{Maximum principles}
Recall that a function $\omega(s): [0, +\infty) \to \mathbb R$, is called a modulus of the expansion for a vector field $X$ if
\begin{equation}\label{con-o}
\left( X(y)-X(x)\right)\cdot \frac{y-x}{|y-x|}\ge 2\omega\left(\frac{|y-x|}{2}\right).
\end{equation}
Under this terminology (\ref{AC-est1}) amounts to show that $\frac{\pi}{D}\tan\left(\frac{\pi}{D}s\right)$ is a
modulus of expansion for $X=-\nabla \log\phi_0$. On a Riemannian manifold, (\ref{con-o}) can be modified into a condition:
\begin{equation}\label{con-1}
X(\gamma(d))\cdot \gamma'(d)-X(\gamma(0))\cdot \gamma'(0) \ge  2\omega\left(\frac{r(y,x)}{2}\right)
\end{equation}
for any minimizing geodesics from $x$ to $y$ with $\gamma(0)=x$, $\gamma'(d)=y$, $d=r(x, y)$.

Let $X(x)$ be a $C^2$-vector field on $\Omega$. Assume that $X$ satisfies the differential equation:
\begin{equation}\label{eq-x}
\Delta X =2 \nabla_X X-V( x, X)
\end{equation}
where $V(x, p)$ is a $C^1$-vector field defined on $\Omega\times \mathbb R^n$, which we assume that it is jointly convex in the sense that $\omega(s)\equiv 0$ is an expansion modulus of $V$, namely
\begin{equation}\label{V-ass1}
\left(V(y, X(y))-V(x, X(x))\right)\cdot \frac{y-x}{|y-x|}\ge 0.
\end{equation}
Let $\psi(s): [0, \frac{D}{2}) \to \mathbb R$ be a $C^2$ function which satisfies that $\psi(0)=0$,
\begin{equation}\label{psi-ineq}
\psi'' \le- 2 \psi' \psi
\end{equation}
and  $\psi'< 0$.

\begin{theorem}\label{max-e}
Assume that $X(x)$ is a solution to (\ref{eq-x}) on $\Omega$, a bounded domain in $\mathbb R^n$ with diameter $D$. Let $\psi$ be a function defined above. Then
$$
\mathcal{C}(x, y)\doteqdot \left( X(y)-X(x)\right)\cdot \frac{y-x}{|y-x|} +2\psi\left(\frac{|y-x|}{2}\right)
$$
can not attain a negative minimum in the interior, namely for some $(x_0, y_0)$ with $x_0, y_0\in \Omega$.
\end{theorem}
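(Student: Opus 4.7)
I argue by contradiction: assume $\mathcal{C}$ attains a negative minimum at an interior point $(x_0,y_0)\in\Omega\times\Omega$. Since $\mathcal{C}(x,y)\to 0$ as $y\to x$, we must have $x_0\ne y_0$. Set $e:=(y_0-x_0)/|y_0-x_0|$, $s_0:=|y_0-x_0|/2$, and decompose $X(z)=\alpha(z)e+X^\perp(z)$ with $\alpha(z):=X(z)\cdot e$; the hypothesis then reads $\alpha(y_0)-\alpha(x_0)<-2\psi(s_0)$. The plan is to derive the opposite inequality $\alpha(y_0)-\alpha(x_0)\ge -2\psi(s_0)$ from the critical-point conditions and the PDE, yielding a contradiction.

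For the first-order step I probe with variations $(x_0+ta,y_0+tb)$ and impose $\partial_t\mathcal{C}|_{t=0}=0$. Moving a single endpoint along $\pm e$ freezes $e$ and shifts $s$ linearly, forcing
\[
(\nabla_e X(x_0))\cdot e \;=\; (\nabla_e X(y_0))\cdot e \;=\; -\psi'(s_0).
\]
Moving a single endpoint along $u\perp e$ keeps $s$ stationary at first order while tilting $e$ at rate $u/(2s_0)$, forcing
\[
(\nabla_u X(x_0))\cdot e \;=\; (\nabla_u X(y_0))\cdot e \;=\; -\tfrac{1}{2s_0}(X^\perp(y_0)-X^\perp(x_0))\cdot u.
\]
Taking $u=X^\perp(y_0)$ at $y_0$ and $u=X^\perp(x_0)$ at $x_0$, in combination with the decomposition $X=\alpha e+X^\perp$, assembles the key identity
\[
\bigl[\nabla_{X(y_0)}X(y_0)-\nabla_{X(x_0)}X(x_0)\bigr]\cdot e \;=\; (\alpha(y_0)-\alpha(x_0))(-\psi'(s_0))-\tfrac{1}{2s_0}|X^\perp(y_0)-X^\perp(x_0)|^2.
\]

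For the second-order step I use two families of variations. Translations $(a,b)=(w,w)$ fix both $e$ and $s$, so $[D^2_{w,w}(X(y_0)-X(x_0))]\cdot e\ge 0$; summing over an orthonormal basis of $e^\perp$ yields $[(\Delta-D^2_{e,e})(X(y_0)-X(x_0))]\cdot e\ge 0$. The stretching variation $(a,b)=(-e,e)$ moves the two endpoints apart along the line, keeping $e$ fixed and producing $s(t)=s_0+t$, so that
\[
[D^2_{e,e}(X(y_0)-X(x_0))]\cdot e + 2\psi''(s_0)\ge 0.
\]
Adding these yields $\Delta[X(y_0)-X(x_0)]\cdot e \ge -2\psi''(s_0)$. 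The PDE $\Delta X=2\nabla_X X-V$ combined with the joint convexity $(V(y_0,X(y_0))-V(x_0,X(x_0)))\cdot e\ge 0$ gives $\Delta[X(y_0)-X(x_0)]\cdot e \le 2[\nabla_{X(y_0)}X(y_0)-\nabla_{X(x_0)}X(x_0)]\cdot e$. Substituting the first-order identity and discarding the non-positive $|X^\perp|^2$ term produces
\[
(\alpha(y_0)-\alpha(x_0))(-\psi'(s_0))\;\ge\;-\psi''(s_0)\;\ge\;2\psi'(s_0)\psi(s_0),
\]
the last inequality being the hypothesis $\psi''\le-2\psi'\psi$. Dividing by $-\psi'(s_0)>0$ gives $\alpha(y_0)-\alpha(x_0)\ge-2\psi(s_0)$, the required contradiction.

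The principal hurdle is sign-tracking in the stretching variation, where the opposite motions of $x$ and $y$ cause $D^2_{e,e}$ to appear as a \emph{difference} rather than a sum, and require a careful chain-rule computation to see that the $\psi''$ contribution comes with coefficient $+2$, so that after combining with the translation inequality and feeding into the PDE the result dovetails with $\psi''\le-2\psi'\psi$ exactly. A secondary subtlety is checking that the perpendicular first-order identities conspire to make the $|X^\perp(y_0)-X^\perp(x_0)|^2$ contribution in the difference of $\nabla_X X\cdot e$ appear with a negative sign, so it can be safely discarded at the end.
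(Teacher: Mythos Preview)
Your argument is correct and is essentially the same as the paper's proof: both argue by contradiction at an interior negative minimum, use identical first-variation identities (your perpendicular condition is exactly the paper's equation~(\ref{1st-eq1}) and your $e$-direction condition is~(\ref{1st-eq2})), use the same second-variation choices (your translations $(w,w)$ and stretch $(-e,e)$ are precisely the paper's $E_i=e_i\oplus e_i$ and $E_n=e_n\oplus(-e_n)$), and combine the PDE with the convexity of $V$ and the inequality $\psi''\le-2\psi'\psi$ in the same way. Your decomposition $X=\alpha e+X^\perp$ is a convenient repackaging that makes the computation of $[\nabla_{X(y_0)}X(y_0)-\nabla_{X(x_0)}X(x_0)]\cdot e$ slightly cleaner, but the final chain of inequalities is the paper's estimate $\sum_j\nabla^2_{E_jE_j}\mathcal{C}\le -2\psi'\,\mathcal{C}$ rewritten as $\alpha(y_0)-\alpha(x_0)\ge -2\psi(s_0)$.
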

\begin{proof} Argue by contradiction. Assume that at $(x_0, y_0)$, $\mathcal{C}(x, y)$ attains a negative minimum.
Clearly $x_0\ne y_0$ since $\mathcal{C}(x, x)=0$. Since for any $w_1\in T_{x_0}\mathbb R^n$ and $w_2\in T_{y_0}\mathbb R^n$, $\nabla_{w_1\oplus w_2} \mathcal{C}(x, y)|_{(x_0, y_0)}=0$, if we choose as in \cite{AC} a local orthonormal frame $\{e_i\}$ at $x_0$ such that $e_n=\frac{y_0-x_0}{|y_0-x_0|}$ and parallel translate them along the line interval joining $x_0, y_0$, it then implies that
at $(x_0, y_0)$,
\begin{eqnarray}
&\, &\nabla_{e_i} X(y) \cdot \frac{y-x}{|y-x|}=-\frac{X(y)-X(x)}{|y-x|}\cdot e_i =\nabla_{e_i} X(x) \cdot \frac{y-x}{|y-x|}, \mbox{ for } 1\le i\le n-1, \label{1st-eq1}\\
&\, & \nabla_{e_n} X(y) \cdot \frac{y-x}{|y-x|}=-\psi'\left(\frac{|y-x|}{2}\right)=\nabla_{e_n} X(x) \cdot \frac{y-x}{|y-x|}. \label{1st-eq2}
\end{eqnarray}
Let $E_i=e_i\oplus e_i \in T_{(x_0, y_0)}\mathbb R^n \times \mathbb R^n$ for $1\le i\le n-1$,  and $E_n=e_n \oplus (-e_n)$. Then the fact that $\mathcal{C}(x, y)$ attains its minimum at $(x_0, y_0)$ implies that
$$
\sum_{j=1}^n\nabla^2_{E_j E_j} \mathcal{C}|_{(x_0, y_0)}\ge 0.
$$
Direct calculation shows that at $(x_0, y_0)$
\begin{eqnarray}
0\le \nabla^2_{E_i E_i}\mathcal{C}&=& \left(\nabla^2_{e_i e_i} X(y)-\nabla^2_{e_i e_i} X(x)\right)\cdot \frac{y-x}{|y-x|}, \mbox{ for } 1\le i\le n-1, \label{2nd-ineq1}\\
0\le \nabla^2_{E_n E_n} \mathcal{C} &=& \left(\nabla^2_{e_n e_n} X(y)-\nabla^2_{e_n e_n} X(x)\right)\cdot \frac{y-x}{|y-x|} +2\psi''.
\end{eqnarray}
On the other hand using the equation (\ref{eq-x}), assumption (\ref{V-ass1}) we have that at $(x_0, y_0)$,
\begin{eqnarray*}
\sum_{j=1}^n \nabla^2_{E_j E_j} \mathcal{C} &=& \left(\Delta X(y) -\Delta X(x)\right) \cdot \frac{y-x}{|y-x|}+2\psi''\\
&\le& 2\left(\nabla_{X(y)} X(y)-\nabla_{X(x)}X(x)\right)\cdot \frac{y-x}{|y-x|} -4\psi' \psi.
\end{eqnarray*}
Now note that at $(x_0, y_0)$, using (\ref{1st-eq1}) and (\ref{1st-eq2}),
\begin{eqnarray*}
\nabla_{X(y)} X(y) \cdot \frac{y-x}{|y-x|}&=& \langle \nabla_{X(y)} X(y), e_n\rangle\\
&=& \sum_{j=1}^n \langle X(y), e_j\rangle \langle \nabla_{e_j} X(y), e_n\rangle\\
&=& -\frac{1}{|y-x|}\sum_{i=1}^{n-1} \langle X(y), e_i\rangle \langle X(y)-X(x), e_i\rangle -\psi' X(y)\cdot \frac{y-x}{|y-x|}.
\end{eqnarray*}
Combining the above two inequalities we conclude that at $(x_0, y_0)$,
\begin{eqnarray}
\sum_{j=1}^n \nabla^2_{E_j E_j} \mathcal{C}&\le& -\frac{2}{|y-x|}\sum_{i=1}^{n-1} \langle X(y)-X(x), e_i\rangle^2
-2\psi' (X(y)-X(x))\cdot \frac{y-x}{|y-x|} -4\psi'\psi \nonumber\\
&\le& -2\psi' \mathcal{C}. \label{est-fin1}
\end{eqnarray}
By assumption that $\mathcal{C}(x_0, y_0)<0$ and $\psi' <0$, estimate (\ref{est-fin1}) is contradictory to the fact that  $\sum_{j=1}^n \nabla^2_{E_j E_j} \mathcal{C}|_{(x_0, y_0)}\ge 0$.
\end{proof}

With little modification, the proof gives the same result for $V$ with non-vanishing expansion modulus $\omega(s)$. In this case $\psi$ is assumed to satisfy:
\begin{equation}\label{ass-gen1}
\psi''\le -2 \psi \psi' +\omega.
\end{equation}
Similar argument also proves the following result for a related parabolic equation, which is quite close to Theorem 4.1 of \cite{AC}.

\begin{theorem}\label{max-p} Assume that $X(x, t)$ on $\Omega \times [0, T]$ satisfies the equation:
\begin{equation}\label{x-eq2}
\left(\frac{\partial }{\partial t}-\Delta\right) X(x, t) =-2 \nabla_{X(x, t)} X(x, t)+V(x, X(x, t))
\end{equation}
with the vector field $V(x, X(x))$ having a modulus of expansion $\omega(s)$. Let $\psi(s, t)$ be a function defined on $[0, \frac{D}{2})\times [0, T]$, with $\psi(0, t))=0$ and $\psi'(s, t)< 0$, satisfies the parabolic inequality
\begin{equation}\label{ass-2}
\psi_t-\psi'' \ge 2 \psi' \psi-\omega.
\end{equation}
Then $\mathcal{C}(x, y, t)=\langle X(y, t)-X(x, t), \frac{y-x}{|y-x|}\rangle +2\psi\left(\frac{|y-x|}{2}, t\right)$ can not attain the negative minimum in the (parabolic) interior.
\end{theorem}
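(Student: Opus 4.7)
The plan is to mimic the proof of Theorem \ref{max-e}, replacing the purely spatial inequality $\sum_j \nabla^2_{E_j E_j}\mathcal{C} \ge 0$ at an interior minimum by the parabolic inequality $\partial_t \mathcal{C} - \sum_j \nabla^2_{E_j E_j}\mathcal{C} \le 0$ that must hold at a first interior point where $\mathcal{C}$ becomes negative. I would argue by contradiction: select $(x_0, y_0, t_0)$ with $x_0, y_0 \in \Omega$ and $t_0 \in (0, T]$ at which $\mathcal{C}(x_0, y_0, t_0) < 0$ is attained as the minimum over $\ol{\Omega}\times\ol{\Omega}\times[0,t_0]$, so that in particular $\partial_t \mathcal{C} \le 0$ there. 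Since $\mathcal{C}(x,x,t) = 0$ one still has $x_0 \ne y_0$, and with the same orthonormal frame $\{e_i\}$ and the doubled vectors $E_i = e_i \oplus e_i$, $E_n = e_n \oplus (-e_n)$ as in Theorem \ref{max-e}, the first-order identities (\ref{1st-eq1}) and (\ref{1st-eq2}) carry over verbatim, with $\psi$ now depending also on $t$.

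The central computation is to evaluate $\partial_t \mathcal{C} - \sum_j \nabla^2_{E_j E_j}\mathcal{C}$ at $(x_0, y_0, t_0)$. Substituting the evolution equation (\ref{x-eq2}) shows that the terms $(\Delta X(y) - \Delta X(x))\cdot e_n$ appearing in the two pieces cancel, leaving the nonlinear drift, the vector field $V$, and the time and second radial derivatives of $\psi$. The drift is handled exactly as in the proof of Theorem \ref{max-e}: expanding $\nabla_{X(\cdot)} X(\cdot)\cdot e_n$ in the frame $\{e_i\}$ and inserting the first-order identities together with the algebraic relation $\langle X(y)-X(x), e_n\rangle = \mathcal{C} - 2\psi$ produces a nonnegative sum of squares of the tangential components of $X(y)-X(x)$ plus $2\psi'(\mathcal{C}-2\psi) = 2\psi'\mathcal{C} - 4\psi'\psi$. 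The expansion-modulus assumption on $V$ contributes $(V(y,X(y)) - V(x,X(x)))\cdot e_n \ge 2\omega(|y-x|/2)$, while (\ref{ass-2}) yields $2\psi_t - 2\psi'' \ge 4\psi'\psi - 2\omega$.

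Assembling these three contributions, the $\mp 4\psi'\psi$ terms cancel against each other, as do the $\pm 2\omega$ terms, and what is left is
$$\partial_t \mathcal{C} - \sum_j \nabla^2_{E_j E_j}\mathcal{C} \ge \frac{2}{|y-x|}\sum_{i=1}^{n-1}\langle X(y) - X(x), e_i\rangle^2 + 2\psi'\,\mathcal{C}$$
at $(x_0, y_0, t_0)$. Since $\psi' < 0$ and $\mathcal{C}(x_0, y_0, t_0) < 0$, the right-hand side is strictly positive, contradicting the parabolic-minimum inequality $\partial_t \mathcal{C} - \sum_j \nabla^2_{E_j E_j}\mathcal{C} \le 0$. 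The main obstacle I anticipate is simply the sign bookkeeping in the cancellations involving $\psi'\psi$ and $\omega$: it is precisely this match-up that forces the specific form of the structural hypothesis (\ref{ass-2}), and once the algebra is lined up correctly the argument reduces to the same contradiction as in the elliptic proof.
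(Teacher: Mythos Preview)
Your proposal is correct and is exactly the argument the paper has in mind: the text does not spell out a proof of Theorem~\ref{max-p} but says ``Similar argument also proves the following result,'' and the explicit proof of the closely related Theorem~\ref{max-p1} follows precisely the template you describe---evaluate $\partial_t\mathcal{C}-\sum_j\nabla^2_{E_jE_j}\mathcal{C}$ at the interior minimum, use the first-variation identities (\ref{1st-eq1})--(\ref{1st-eq2}) to handle the drift term, and arrive at the contradiction $0\ge 2\psi'\mathcal{C}>0$. Your sign bookkeeping with the $\pm 4\psi'\psi$ and $\pm 2\omega$ cancellations is right and indeed is what dictates the form of (\ref{ass-2}).
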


\section{Boundary asymptotics}
 First we show how to apply Theorem \ref{max-e} to obtain the estimate (\ref{AC-est1}) by establishing the boundary asymptotical estimates on $\mathcal{C}(x, y)$ for the case $X(x)=-\log \phi_0$. For this application we assume that $\Omega$ is $C^2$ and strictly convex, and we take
 $\psi(s)=-\frac{\pi}{D'}\tan \left(\frac{\pi}{D'}s\right)$ with $D'>D$, and $X=-\nabla \log \phi_0$. It is easy to see that $X$ satisfies (\ref{eq-x}) with $V(x)=\nabla q$. One can also check that $\psi'\left(\frac{|y-x|}{2}\right) <0$ and $\psi''=-2\psi \psi'$. The strategy is to prove that $\mathcal{C}(x, y)\ge 0$ for any $D'>D$ and then taking $D'\to D$ to obtain the estimate (\ref{AC-est1}). Clearly for $D'>D$, $\psi\left(\frac{|y-x|}{2}\right)$ is uniformly continuous on $\overline{\Omega}\times \overline{\Omega}$.

 Recall that $\phi_0$, the first eigenfunction (with the Dirichlet boundary) of $\mathcal{L}_q$, is a smooth function on $\overline{\Omega}$ such that $\phi_0(x)>0$ for any $x\in \Omega$, $\phi_0|_{\partial \Omega}=0$ and $\frac{\partial \phi_0}{\partial \nu}|_{\partial \Omega}<0$, where $\nu$ is the exterior unit normal. We assume that
 \begin{equation}\label{C2-bound}
 \|\phi_0\|_{C^2(\overline{\Omega})}\le \frac{A}{2}
 \end{equation}
 for some $A>0$.
 For any given $\epsilon >0$ we shall prove that
 $\mathcal{C}(x, y)\ge -\epsilon$ on $\Omega\times \Omega$. Note that on the diagonal $\Delta=\{(x, x)|\, x\in \Omega\}$, $\psi\left(\frac{|y-x|}{2}\right) \equiv 0$.
 Thus by the uniform continuity of $\psi\left(\frac{|y-x|}{2}\right)$, there exists $\eta>0$ such that on $\{(x, y)\in \Omega \times \Omega\, |\, |y-x|\le \eta\}$, a $\eta$-neighborhood of $\Delta$,  denoted by $\Delta_\eta$, $2\psi\ge -\epsilon$.

  Now let $\Omega_\delta=\{x|\phi_0(x)\ge \delta\}$. Also assume that  $\delta << \eta$.  We shall show that
   \begin{equation}\label{claim} \mathcal{C}(x, y)\ge 0 \mbox{ on } \partial\left(\Omega_\delta\times \Omega_\delta\right)\setminus \Delta_\eta,\end{equation} for $\delta$ sufficiently small. On $\partial \Delta_\eta$, by the log-concavity of Brascamp-Lieb (we can avoid appealing to this result, as explained in the remark below),  $\mathcal{C}(x, y)\ge 2\psi\ge -\epsilon$. Hence Theorem \ref{max-e} implies that $\mathcal{C}(x, y)\ge -\epsilon$. Below we shall show claim (\ref{claim}). This can be seen via the following considerations.

Since $|\nabla \phi_0|>0$ on $\partial \Omega$, we assume that there exists $\delta_0>0$ and $\theta_1>0$ such that
\begin{equation}\label{gradient-l}
|\nabla \phi_0|\ge \theta_1
\end{equation}
for $x\in \Omega \setminus \Omega_{\delta_0}$. This in particular implies that  for  $\delta\le \delta_0$,    $\partial \Omega_\delta$ is a smooth hypersurface. Since $\Omega$ is convex, we may choose $\delta_0$ small enough so that there exists $\theta_2>0$ such that for any $\delta\le \delta_0$, the second fundamental form $\operatorname{II} (\cdot, \cdot)$ of $\partial \Omega_\delta$
satisfies
\begin{equation}\label{convex-delta}
\operatorname{II} (\cdot, \cdot)\ge \theta_2 \operatorname{I}(\cdot, \cdot),
 \end{equation}where $\operatorname{I}(\cdot, \cdot)$ denotes the induced metric tensor on $\partial \Omega_\delta$.
On the level hypersurface $\partial \Omega_\delta$, the following formula is also well-known:
\begin{equation}\label{rel-1}
\operatorname{II} (\cdot, \cdot)=\frac{\nabla^2 \phi_0(\cdot, \cdot)}{|\nabla \phi_0|}
\end{equation}
 as symmetric tensors on $T\partial \Omega_\delta$. We also make $\delta_0\le \eta$.

Now let  $C_1=\frac{1}{2}\left(\frac{A^2}{\theta_1\theta_2}+A\right)$ and $\delta_1=\min\{\delta_0,  \frac{\theta_1^2}{4 C_1}, \frac{1}{\theta_2}\}$.
Since $\Omega$ is strictly convex, if $x\in \partial \Omega$ and $y\in \overline{\Omega}$ such that $|y-x|\ge \delta_1/A>0$ there exist $\theta_3>0$, depending only on $\delta_1/A$ and $\Omega$  such that
 \begin{equation}\label{acute}
 \langle -\nu_x, \frac{y-x}{|y-x|}\rangle \ge \theta_3.
 \end{equation}
By the continuity we may also assume that the same estimate holds if $\Omega$ is replaced by $\Omega_\delta$ for $\delta\le \delta_0$.

For $x\in \partial \Omega_{\delta/2}$, $y\in \overline{\Omega_{\delta/2}}$, let $\gamma(s)$ be the line interval joining $x$ to $y$ parametrized by the arc-length.
 Denote $\gamma'(s)$ by $W$. Along $\gamma(s)$, $W$ can split into the tangential part $W^{\operatorname{T}}$ and $W^\perp$ with respect to $T_{\gamma(s)} \Omega_{\phi_0(\gamma(s))}$ and the inter-normal $-\nu_{\gamma(s)}$. The estimate (\ref{acute}) asserts that $|W^\perp|\ge \theta_3$. We also have the estimate
\begin{eqnarray}
\nabla^2 \phi_0 (W, W) &=& \nabla^2  \phi_0 (W^{\operatorname{T}}, W^{\operatorname{T}})+2 \nabla^2  \phi_0 (W^{\operatorname{T}}, W^\perp)+ \nabla^2  \phi_0 (W^\perp, W^\perp)\nonumber\\
&\le& -|\nabla \phi_0| \operatorname{II}(W^{\operatorname{T}}, W^{\operatorname{T}})+A |W^\perp||W^{\operatorname{T}}|+\frac{A}{2}|W^\perp|^2\nonumber\\
&\le& -\theta_1\theta_2 |W^{\operatorname{T}}|^2+A |W^\perp||W^{\operatorname{T}}|+\frac{A}{2}|W^\perp|^2\nonumber\\
&\le& -\frac{\theta_1\theta_2}{2}|W^{\operatorname{T}}|^2+C_1(\theta_1, \theta_2, A)|W^\perp|^2. \label{hessian-u1}
\end{eqnarray}
Here in second line above we used (\ref{C2-bound}), and in the third line we used (\ref{gradient-l}) and (\ref{convex-delta}).

Hence if  for some integer $j\ge -1$, $\delta'=2^{j+1}\delta$ and $\delta'/2\le \phi_0 \le \delta'$,  we estimate
\begin{eqnarray}
\nabla^2 \log \phi_0 (W, W) &=&\frac{\nabla^2 \phi_0(W, W)}{\phi_0}-\frac{|\nabla \phi_0|^2}{\phi_0^2}|W^\perp|^2\nonumber\\
&\le& -\frac{\theta_1\theta_2}{2\delta'}|W^{\operatorname{T}}|^2
+\frac{2C_1}{\delta'}|W^\perp|^2-\frac{\theta_1^2}{\delta'^2}|W^{\perp}|^2\nonumber \\
&\le&  -\frac{\theta_1\theta_2}{2\delta'}|W^{\operatorname{T}}|^2-\frac{\theta^2_1}{2(\delta')^2}|W^{\perp}|^2,\label{key}
\end{eqnarray}
for $\delta'\le \delta_1$ and $\gamma(s)\in \Omega\setminus \Omega_{\delta_1}$.
Here in the second line we  used (\ref{hessian-u1}) and in the third line we used the definition of $\delta_1$.

On the other hand, direct calculation shows
\begin{eqnarray}
\left(X(y)-X(x)\right)\cdot \frac{y-x}{|y-x|}&=& \langle X(\gamma(s)), \gamma'(s)\rangle |^{|y-x|}_0 \nonumber\\
&=& \int_0^{|y-x|}\frac{d}{ds} \left(\langle X(\gamma(s)), \gamma'(s)\rangle\right)\, ds\nonumber \\
&=& \int_0^{|y-x|} \nabla^2 (-\log \phi_0)(\gamma'(s), \gamma'(s))\, ds. \nonumber
\end{eqnarray}
Thus if $\gamma(s) \in \Omega\setminus \Omega_{\delta_1}$,  (\ref{key}) implies
\begin{equation}\label{1}
\left(X(y)-X(x)\right)\cdot \frac{y-x}{|y-x|}\ge 0.
\end{equation}
Otherwise, there exists $s''$, the first $s$ such that $\phi_0(\gamma(s))=\delta_1$.  Let $k$ be the integer such that $2^k \delta \le \delta_1< 2^{k+1} \delta$.
Since $|\nabla \phi_0|\le \frac{A}{2}$, we deduce that  if $s_j$ is the first $s$ with $\phi_0(\gamma(s))=\delta'/2$ and $s'_j$ is the first $s$ with $\phi_0(\gamma(s))=\delta'$, then $s'_j-s_j \ge \frac{\delta'}{A}$. Similarly $s''\ge \frac{\delta_1}{A}$, which particularly implies $|y-x|\ge \frac{\delta_1}{A}$. Clearly $s_{-1}=0, s'_j=s_{j+1}$. Now
\begin{eqnarray}
\left(X(y)-X(x)\right)\cdot \frac{y-x}{|y-x|}&\ge&\sum_{j=-1}^{k-1} \int_{s_j}^{s'_j} +\int_{s_k}^{s''}+\int_{s, \mbox{ with } \phi_0(\gamma(s))\ge \delta_1}\nabla^2 (-\log \phi_0)(\gamma', \gamma')\, ds \nonumber\\
&\ge& \frac{\theta^2_1\theta_3^2}{4\delta A}\sum_{j=-1}^{k-1}\frac{1}{2^j}-\frac{4 A^2}{\delta_1^2}D\nonumber\\
&\ge& \frac{C_2}{\delta}-\frac{C_3}{\delta_1^2}. \label{boundary-infty}
\end{eqnarray}
Here in the first line  (\ref{1}) is used, in the second line (\ref{key}), (\ref{acute}) are used, and that $C_2=\frac{\theta^2_1\theta_3^2}{2 A}$,  $C_3= 4A^2D$.

The estimates (\ref{1}), (\ref{boundary-infty}) together with the fact that $\psi$ is uniformly continuous on $\overline{\Omega}\times \overline{\Omega}$ implies the claim (\ref{claim}) hence that for $\delta$ small $\mathcal{C}(x, y)\ge -\epsilon$ on $\partial\left(\Omega_{\frac{\delta}{2}}\times \Omega_{\frac{\delta}{2}} \setminus \Delta_\eta\right)$. Taking $\delta\to 0$ we have that
$$
\mathcal{C}(x, y)\ge -\epsilon
$$
for $x, y\in \Omega$. Taking $\epsilon \to 0$ and then $D'\to D$ we have (\ref{AC-est1}).

\begin{remark} First observe that (\ref{boundary-infty}) implies that for $\delta $ sufficiently small, (\ref{1}) holds for any $(x, y)\in \partial(\Omega_{\frac{\delta}{2}}\times \Omega_{\frac{\delta}{2}})$. By taking $\psi(s)=-\epsilon \tan (\epsilon s)$ with $\epsilon>0$ small, Theorem \ref{max-e} and  estimate (\ref{1})  showed $\mathcal{C}(x, y)\ge 0$ for this case, which implies  Brascamp-Lieb's result on the log-concavity of $ \phi_0$ as $\epsilon \to 0$.\end{remark}

Replacing $\psi$ by $\tilde{\psi}=c\psi(cs)$ with $0<c<1$ and letting $c\to 1$, the same argument as above proves the following corollary which asserts that `boundary convexity'  implies  the `strong convexity' even the domain may not be convex.

\begin{corollary} Assume that $\Omega$ is a bounded domain such that there exists a smooth exhaustion $\Omega_\delta$ with $\Omega_\delta \to \Omega$ as $\delta\to 0$. Assume that $X$ be a $C^2(\Omega)$ vector field satisfying (\ref{eq-x}), such that there exists $\delta_1>0$,   for $(x, y)\in \partial(\Omega_\delta\times \Omega_\delta)$ with $|y-x|\le \delta_1$ (\ref{1}) holds, and for $(x, y)$ with $|x-y|\ge \delta_1$, $(X(y)-X(x))\cdot \frac{y-x}{|y-x|}\to +\infty$ as $\delta\to 0$. Let $\psi$ be as in Theorem \ref{max-e}.
Then $\mathcal{C}(x, y)\ge 0$ for any $x, y\in \Omega$.
\end{corollary}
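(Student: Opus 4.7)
Following the remark preceding this corollary, the plan is to mirror the argument used to prove (\ref{AC-est1}) by replacing $\psi$ with the rescaled function $\tilde\psi(s):=c\psi(cs)$ for $c\in(0,1)$ (playing the role that enlarging $D$ to $D'$ played there), and then sending $c\to 1^{-}$. A direct computation shows $\tilde\psi$ still satisfies the hypotheses of Theorem~\ref{max-e}:
$$
\tilde\psi(0)=0,\qquad \tilde\psi'(s)=c^{2}\psi'(cs)<0,\qquad \tilde\psi''(s)=c^{3}\psi''(cs)\le -2c^{3}\psi'(cs)\psi(cs)=-2\tilde\psi'(s)\tilde\psi(s).
$$
Since $c<1$, $\tilde\psi$ is defined on $[0,D/(2c))\supsetneq[0,D/2]$, so $(x,y)\mapsto\tilde\psi(|y-x|/2)$ is bounded and uniformly continuous on $\overline\Omega\times\overline\Omega$. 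Write $\tilde{\mathcal C}(x,y)$ for the analogue of $\mathcal C$ with $\psi$ replaced by $\tilde\psi$; then $\tilde{\mathcal C}\to\mathcal C$ pointwise as $c\to 1^{-}$, so it is enough to prove $\tilde{\mathcal C}\ge 0$ on $\Omega\times\Omega$ for every $c\in(0,1)$.

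Fix $c\in(0,1)$ and $\eps>0$; I would show $\tilde{\mathcal C}\ge-\eps$ on $\Omega\times\Omega$ and then let $\eps\to 0$. Choose $\eta>0$ via uniform continuity so that $|2\tilde\psi(s/2)|<\eps$ for $s<\eta$, and set $\Delta_\eta=\{(x,y):|y-x|<\eta\}$. The hypothesis that the first term tends to $+\infty$ for $|y-x|\ge\delta_1$ lets me pick $\delta$ small enough that this first term exceeds $2\|\tilde\psi\|_\infty+\eps$ on $\partial(\Omega_\delta\times\Omega_\delta)\cap\{|y-x|\ge\delta_1\}$, so $\tilde{\mathcal C}\ge\eps$ there. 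The other hypothesis, that (\ref{1}) holds for $|y-x|\le\delta_1$, gives $\tilde{\mathcal C}\ge 2\tilde\psi(|y-x|/2)$ on $\partial(\Omega_\delta\times\Omega_\delta)\cap\{|y-x|\le\delta_1\}$, and this bound is $\ge-\eps$ whenever $|y-x|<\eta$.

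The main obstacle is the ``middle'' piece $\eta\le|y-x|\le\delta_1$ of the external boundary, where (\ref{1}) only furnishes first term~$\ge 0$ while $2\tilde\psi(|y-x|/2)$ need not be close to zero. To deal with it I would first run the same scheme with the small comparison function $\psi_\tau(s)=-\tau\tan(\tau s)$, for which $\|\psi_\tau\|_\infty=O(\tau^{2})$: then the middle-piece obstacle is uniformly $O(\tau^{2})$, Theorem~\ref{max-e} gives $\mathcal C_{\psi_\tau}\ge-O(\tau^{2})$ on $\Omega\times\Omega$, and letting $\tau\to 0$ produces the generalized Brascamp-Lieb inequality $(X(y)-X(x))\cdot\frac{y-x}{|y-x|}\ge 0$ throughout $\Omega\times\Omega$. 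Armed with this pointwise bound, I would carve out $\Delta_\eta$ and apply Theorem~\ref{max-e} to $\tilde{\mathcal C}$ on $(\Omega_\delta\times\Omega_\delta)\setminus\Delta_\eta$: on the inner sphere $\partial\Delta_\eta$, generalized Brascamp-Lieb combined with uniform continuity of $\tilde\psi$ gives $\tilde{\mathcal C}\ge 2\tilde\psi(\eta/2)\ge-\eps$, and the external-boundary contribution is controlled as above. Sending $\delta\to 0$, $\eps\to 0$, and $c\to 1^{-}$ closes the argument; the crux is the bootstrap via $\psi_\tau\to 0$, which exploits the same rescaling flexibility that provided uniform continuity of $\tilde\psi$ in the first place.
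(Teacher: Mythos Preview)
Your plan is exactly the paper's: the paper's entire proof is the one sentence ``replacing $\psi$ by $\tilde\psi=c\psi(cs)$ with $0<c<1$ and letting $c\to 1$, the same argument as above proves the corollary,'' and you have correctly unpacked ``the same argument as above,'' including the $\psi_\tau\to 0$ bootstrap from the Remark to secure the generalized Brascamp--Lieb inequality.

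There is, however, one step where your sketch does not close. After the bootstrap you assert that ``the external-boundary contribution is controlled as above,'' but ``above'' only handled the regimes $|y-x|\ge\delta_1$ (blow-up) and $|y-x|<\eta$ (where $|2\tilde\psi|<\eps$). Once you carve out $\Delta_\eta$ with $\eta<\delta_1$, the external boundary $\partial(\Omega_\delta\times\Omega_\delta)\setminus\Delta_\eta$ still contains the strip $\{\eta\le |y-x|\le\delta_1\}$, and there the Brascamp--Lieb bound gives only first term $\ge 0$, hence $\tilde{\mathcal C}\ge 2\tilde\psi(\delta_1/2)$, a \emph{fixed} negative number independent of $\eps$ and $\delta$. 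The bootstrap helps on the inner sphere $\partial\Delta_\eta$ but contributes nothing new on this external strip; Theorem~\ref{max-e} then yields only $\tilde{\mathcal C}\ge 2\tilde\psi(\delta_1/2)$, and sending $\eps,\delta\to 0$, $c\to 1$ gives $\mathcal C\ge 2\psi(\delta_1/2)<0$ rather than $\mathcal C\ge 0$.

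What ``the same argument as above'' actually buys in Section~3 is that there $\delta_1$ is a parameter chosen \emph{after} $\eta$ (the paper enforces $\delta_1\le\delta_0\le\eta$), so the external middle strip is empty by construction and the only place one sees $|y-x|\le\delta_1$ is inside $\Delta_\eta$, where the inner-boundary estimate applies. To make your write-up match, you should take $\eta\ge\delta_1$ (equivalently, read the corollary's hypothesis as holding for arbitrarily small $\delta_1$, as it does in the model case): then the external boundary of $(\Omega_\delta\times\Omega_\delta)\setminus\Delta_\eta$ lies entirely in $\{|y-x|\ge\delta_1\}$, where the blow-up hypothesis gives $\tilde{\mathcal C}\ge 0$ for small $\delta$, and the only loss $-\eps$ comes from $\partial\Delta_\eta$ via your bootstrapped Brascamp--Lieb bound.
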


\section{Improved log-concavity estimate  on the fundamental solution}

 Here we improve the log-concavity of the fundamental solution proved in \cite{BL}. Let $H(z, x, t)$ be the fundamental solution to the heat operator $\frac{\partial}{\partial t} -\mathcal{L}_q$ with Dirichlet boundary value on a strictly convex domain $D$. We use $K(z,x, t)$ to denote the Euclidean heat kernel $\frac{1}{(4\pi t)^{n/2}}\exp(-\frac{|x-z|^2}{4t})$. It was proved  in \cite{BL} that $\phi(x, t)\doteqdot\left(\frac{H}{K}\right)(z, x, t)$ is a log-concave function of $x$. The improved estimate asserts that $-\nabla \log \phi$ has a expansion modulus given by the one dimensional case. Before we state the improved version precisely,
  first let $\bar{H}(s, t)$ and $\bar{K}(s, t)$ be the corresponding  fundamental solutions (concentrated at $s=0$) on $[-\frac{D}{2}, \frac{D}{2}]$  and $\mathbb R$ for operator $\frac{\partial}{\partial t}-\frac{\partial^2}{\partial s^2}$. Let $\psi(s, t)=\left(\log \left(\frac{\bar{H}}{\bar{K}}\right)\right)'$.

 \begin{theorem}\label{AC-p1} With the notation above, if $q$ is convex, then for any $t\ge 0$
 \begin{equation}\label{AC-est-p2}
 -\left(\nabla_y \log \phi (y, t)-\nabla_x\log \phi(x, t)\right)\cdot \frac{y-x}{|y-x|}\ge -2\psi\left(\frac{|y-x|}{2}, t\right).
 \end{equation}
 \end{theorem}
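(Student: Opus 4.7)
The plan is to apply Theorem \ref{max-p} to the vector field $Y(x,t) := -\nabla_x \log H(z, x, t)$, with the one-dimensional comparison $\psi_{D'}(s,t) := (\log \bar H_{D'})'(s, t)$, where $\bar H_{D'}$ denotes the 1D Dirichlet heat kernel centered at $0$ on the slightly enlarged interval $[-D'/2, D'/2]$ with $D' > D$. This enlargement is needed so that $\psi_{D'}$ is bounded on $\{|y-x|/2 \le D/2\}$, replacing the blow-up at $s = D/2$. Taking the logarithm of $(\partial_t - \Delta + q) H = 0$ and then the gradient gives
\begin{equation*}
(\partial_t - \Delta) Y = -2 \nabla_Y Y + \nabla q,
\end{equation*}
so $Y$ solves (\ref{x-eq2}) with $V(x) = \nabla q(x)$; convexity of $q$ provides $V$ with expansion modulus $\omega \equiv 0$. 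On the $1$-dimensional side, log-concavity of $\bar H_{D'}$ (classical Brascamp-Lieb) gives $\psi_{D'}' < 0$, the symmetry $\bar H_{D'}(-s, t) = \bar H_{D'}(s, t)$ gives $\psi_{D'}(0, t) = 0$, and differentiating the Cole-Hopf identity $(\log \bar H_{D'})_t = (\log \bar H_{D'})'' + ((\log \bar H_{D'})')^2$ in $s$ produces $\partial_t \psi_{D'} - \psi_{D'}'' = 2 \psi_{D'} \psi_{D'}'$, i.e.\ (\ref{ass-2}) with equality and $\omega = 0$.

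The key structural observation is that the singular $t \to 0$ behaviour of $Y$ and of $\psi_{D'}$ match exactly. Writing $H = K \phi$, $\bar H_{D'} = \bar K \bar \phi_{D'}$ and using $\nabla_x \log K = -(x-z)/(2t)$, $(\log \bar K)'(s, t) = -s/(2t)$, a direct computation shows
\begin{align*}
\mathcal{C}_{D'}(x, y, t) &:= (Y(y, t) - Y(x, t)) \cdot \tfrac{y-x}{|y-x|} + 2 \psi_{D'}(\tfrac{|y-x|}{2}, t) \\
&\phantom{:}= -(\nabla_y \log \phi - \nabla_x \log \phi) \cdot \tfrac{y-x}{|y-x|} + 2 (\log(\bar H_{D'}/\bar K))'(\tfrac{|y-x|}{2}, t),
\end{align*}
with all $1/t$ singularities cancelling. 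The right-hand side is precisely the quantity whose non-negativity (after $D' \to D$) is asserted in (\ref{AC-est-p2}); moreover it extends continuously to $t = 0$ with value $0$, because both $\phi(z, \cdot, t) \to 1$ and $\bar H_{D'}/\bar K \to 1$ uniformly on compact subsets as $t \to 0$.

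Now fix $\epsilon > 0$ and apply Theorem \ref{max-p} on the cylinder $\Omega_\delta \times \Omega_\delta \times [t_0, T]$, where $\Omega_\delta := \{\phi_0 \ge \delta\} \Subset \Omega$ is the exhaustion used in Section 3. On the initial slice $\{t = t_0\}$, the previous paragraph gives $\mathcal{C}_{D'}(\cdot, \cdot, t_0) \ge -\epsilon$ once $t_0$ is taken small enough. On the lateral boundary $\partial(\Omega_\delta \times \Omega_\delta) \times [t_0, T]$ the argument parallels (\ref{hessian-u1})--(\ref{boundary-infty}): the parabolic Hopf lemma yields $H(z, x, t) \asymp d(x, \partial \Omega)$ uniformly for $t \in [t_0, T]$, so the Hessian chain used for $-\log \phi_0$ in Section 3, now applied to $-\log H(z, \cdot, t)$, forces $(Y(y, t) - Y(x, t)) \cdot \tfrac{y-x}{|y-x|} \to +\infty$ as $\delta \to 0$ whenever $|y-x|$ is bounded below, while $\psi_{D'}$ stays bounded; for $|y-x|$ small the classical log-concavity of $H$ in $x$ (Brascamp-Lieb) already makes this inner product non-negative and $\psi_{D'}$ is uniformly small by continuity. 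Theorem \ref{max-p} then rules out a negative interior minimum, so $\mathcal{C}_{D'} \ge -\epsilon$ throughout; sending $\delta, t_0, \epsilon \to 0$ and finally $D' \to D$ yields (\ref{AC-est-p2}). The main difficulty is the uniform-in-$t$ version of the boundary chain (\ref{hessian-u1})--(\ref{boundary-infty}): the needed ingredients (parabolic Hopf lower bound, $C^2$ kernel bounds on compact sub-intervals of $(0, T]$) are classical but must be assembled so that the $\theta_1,\theta_2,\theta_3$-type constants are controlled uniformly over $t \in [t_0, T]$, and so that the $\phi$-reformulation used on the initial slice is compatible with the $Y$-formulation used on the lateral boundary.
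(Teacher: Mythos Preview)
Your approach is correct but takes a genuinely different route from the paper. The paper does \emph{not} apply Theorem~\ref{max-p} to $Y=-\nabla\log H$; instead it works with $X=-\nabla\log\phi$ where $\phi=H/K$, derives for $X$ the modified equation~(\ref{eq-x-p2}) carrying explicit $\tfrac{1}{t}$ terms, and proves a separate maximum principle (Theorem~\ref{max-p1}) tailored to that equation, in which the comparison quantity is $t\cdot\mathcal{C}$ rather than $\mathcal{C}$. The regularization is also different: the paper uses the parabolic scaling $\tilde\psi(s,t)=\epsilon\,\psi(\epsilon s,\epsilon^2 t)$ with $\epsilon\nearrow 1$, whereas you enlarge the interval to $D'>D$.

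What your route buys is economy: you recycle Theorem~\ref{max-p} verbatim and replace the new maximum principle by the algebraic observation that the $1/t$ parts of $Y$ and of $(\log\bar H_{D'})'$ cancel in $\mathcal{C}_{D'}$, leaving exactly the $\phi$-expression the paper studies. In effect your $\mathcal{C}_{D'}$ coincides with the paper's $\mathcal{C}/t$, so the two arguments are tracking the same quantity through different bookkeeping. The paper's formulation has the advantage that the factor $t$ makes the vanishing at $t=0$ automatic once one knows $X$ and $\psi$ are bounded, and Theorem~\ref{max-p1} is of independent interest; your formulation has the advantage of not needing that auxiliary theorem at all. The boundary and short-time ingredients you invoke (parabolic Hopf lemma for $H$, the Section~3 Hessian chain, heat-kernel asymptotics for $\phi\to 1$) are the same ones the paper uses, applied to $H$ rather than to $\phi=H/K$; since $K$ is smooth and positive on $\overline\Omega$ for $t\ge t_0$, the two are equivalent.
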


 The estimate (\ref{AC-est-p2}) has the following equivalent form:
 \begin{equation}\label{AC-est-p3}
 \left(\nabla_y \log H (z, y, t)-\nabla_x\log H(z, x, t)\right)\cdot \frac{y-x}{|y-x|}\le 2\left(\log \bar{H}\right)'\left(\frac{|y-x|}{2}, t\right).
 \end{equation}
 Since $(\log\bar{H})'<0$ and $\left(\log \left(\frac{\bar{H}}{\bar{K}}\right)\right)'<0$ on $(0, \frac{D}{2})$, Theorem \ref{AC-p1} and (\ref{AC-est-p3}) improve the earlier result of Brascamp-Lieb. It is sharp since the equality holds for dimension one.
 For its proof we need the following variation  of Theorem \ref{max-p}.

 \begin{theorem}\label{max-p1} Let $\Omega$ be  bounded domain of $\mathbb R^n$ with diameter $D$. Let $X(x, t)$ be a $C^2$-vector field defined on $\Omega\times(0, T]$ satisfying the equation
 \begin{equation}\label{eq-x-p2}
 \left(\frac{\partial}{\partial t} -\Delta\right) X(x, t) =V(x, X(x, t))-2\nabla_X X(x, t) -\frac{1}{t}\langle \nabla_{(\cdot)} X(x, t), x-z\rangle -\frac{1}{t}X(x, t)
 \end{equation}
 where $z\in \mathbb R^n$ is fixed, with $V(x, X)$ being  jointly convex.  Assume further that $X$ is symmetric i.e. $\langle \nabla _{W_1} X, W_2\rangle =\langle \nabla_{W_2} X, W_1\rangle$ for any $W_i$.  Let $\psi(s, t)$ be as in Theorem \ref{AC-p1}, or more generally a $C^{1, 2}$-function on $[0, \frac{D}{2})\times \mathbb{R}\to \mathbb R$ with $\psi(0, t)=0$, $\psi'(s,t)<0$ for $s>0$, and satisfying
 \begin{equation}\label{psi-inq}
\psi_t-\psi''\ge 2\psi \psi' -\frac{\psi}{t}-\frac{\psi'}{t}s.
 \end{equation}
  Then
 $$\mathcal{C}(x, y, t)\doteqdot t\left(\langle X(y, t)-X(x, t), \frac{y-x}{|y-x|}\rangle +2\psi\left(\frac{|y-x|}{2}, t\right)\right)$$
 can not attain a negative minimum in the parabolic interior.
 \end{theorem}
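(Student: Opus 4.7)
The plan is to argue by contradiction, mirroring the scheme of Theorems \ref{max-e} and \ref{max-p}, while keeping careful track of the new cancellations created by the drift terms $-\frac{1}{t}\langle \nabla_{(\cdot)} X, x-z\rangle$ and $-\frac{1}{t} X$ in equation (\ref{eq-x-p2}). Suppose $\mathcal{C}$ attains a negative minimum at an interior parabolic point $(x_0, y_0, t_0)$; since $\mathcal{C}(x,x,t) = 0$, necessarily $x_0 \neq y_0$. Following the setup of Theorem \ref{max-e}, choose an orthonormal frame at $x_0$ with $e_n = (y_0-x_0)/|y_0-x_0|$, parallel-transport along the segment joining the two points, and use the test directions $E_j = e_j \oplus e_j$ for $1 \le j < n$ and $E_n = e_n \oplus (-e_n)$. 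The spatial first-order conditions (\ref{1st-eq1})--(\ref{1st-eq2}) and the inequality $\sum_{j=1}^n \nabla^2_{E_j E_j} \mathcal{C} \geq 0$ transfer verbatim (the prefactor $t>0$ does not affect the spatial critical point equations), while the parabolic minimum supplies $\partial_t \mathcal{C} \leq 0$. Hence $(\partial_t - \sum_{j=1}^n \nabla^2_{E_j E_j})\mathcal{C} \leq 0$ at $(x_0, y_0, t_0)$, and the goal is to produce the opposite strict inequality.

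Writing $\mathcal{C} = tA$ with $A = \langle X(y,t)-X(x,t), v\rangle + 2\psi(|y-x|/2, t)$ and $v = (y-x)/|y-x|$, I would evaluate $(\partial_t - \sum_{j=1}^n \nabla^2_{E_j E_j}) A$ using (\ref{eq-x-p2}). The $V$-term is nonnegative by joint convexity, the $\nabla_X X$-term is handled exactly as in Theorem \ref{max-e} and produces $2\psi' A - 4\psi'\psi$ after dropping a nonnegative square sum, and the remaining drift terms require the new computation. Under the symmetry assumption, $\langle \nabla_{(\cdot)} X, x-z\rangle$ is the vector $\nabla_{x-z} X$. Splitting $y - z = (y-x) + (x-z)$ and expanding $x-z$ in the parallel frame gives
\begin{equation*}
\bigl[\nabla_{y-z} X(y) - \nabla_{x-z} X(x)\bigr] \cdot v \;=\; \nabla_{y-x} X(y) \cdot v \;+\; \sum_{i=1}^{n} \langle x-z, e_i\rangle\, \bigl(\nabla_{e_i} X(y) - \nabla_{e_i} X(x)\bigr) \cdot v,
\end{equation*}
and (\ref{1st-eq1})--(\ref{1st-eq2}) force the sum to vanish, while the surviving piece equals $|y-x|\,\nabla_{e_n} X(y) \cdot v = -2r\psi'$, where $r = |y_0-x_0|/2$. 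Combining with $-\frac{1}{t}(X(y)-X(x))\cdot v = -\frac{1}{t}(A - 2\psi)$ yields
\begin{equation*}
\bigl(\partial_t - {\textstyle\sum_{j=1}^n}\, \nabla^2_{E_j E_j}\bigr) A \;\geq\; 2\psi' A - 4\psi'\psi + \frac{2r\psi'}{t} - \frac{A}{t} + \frac{2\psi}{t} + 2(\psi_t - \psi'')
\end{equation*}
at $(x_0, y_0, t_0)$.

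At this stage the assumption (\ref{psi-inq}) on $\psi$ is invoked: after substituting $2(\psi_t - \psi'') \geq 4\psi\psi' - 2\psi/t - 2r\psi'/t$, the four terms involving $\psi\psi'$, $\psi/t$, and $r\psi'/t$ cancel exactly, leaving $(\partial_t - \sum_{j=1}^n \nabla^2_{E_j E_j}) A \geq 2\psi' A - A/t$. Multiplying by $t$ and adding back the extra $A$ coming from $\partial_t(tA) = A + t\partial_t A$ gives $(\partial_t - \sum_{j=1}^n \nabla^2_{E_j E_j}) \mathcal{C} \geq 2\psi' \mathcal{C}$. At the assumed negative minimum both factors on the right are negative, so the right-hand side is strictly positive, contradicting the parabolic minimum inequality. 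The main obstacle is the bookkeeping around the $z$-dependent drift: the crux is that the $x-z$ portion is annihilated by the spatial first-order conditions, so only the $y-x$ component survives and contributes exactly $-2r\psi'/t$, which is precisely the quantity that (\ref{psi-inq}) is engineered to absorb together with the $-X/t$ term.
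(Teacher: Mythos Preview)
Your proposal is correct and follows essentially the same route as the paper's proof: contradiction at an interior parabolic minimum, the same frame and test directions $E_j$, the same first-order identities (\ref{1st-eq1})--(\ref{1st-eq2}), the same treatment of the $\nabla_X X$-term, and the same use of (\ref{psi-inq}) to close up with $2\psi'\,\mathcal{C}>0$. The only cosmetic differences are that you factor $\mathcal{C}=tA$ and compute on $A$ before reinserting the $t$, whereas the paper works directly on $\mathcal{C}$; and you spell out explicitly how symmetry turns $\langle \nabla_{(\cdot)}X,\,x-z\rangle$ into $\nabla_{x-z}X$ before invoking the first-order conditions, while the paper packages the same step as the identity (\ref{1st-eq-dual}) together with symmetry to obtain $-\langle \nabla_{e_n}X(y),\,y-z\rangle+\langle \nabla_{e_n}X(x),\,x-z\rangle=\psi'\,|y-x|$.
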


Here $\langle \nabla_{(\cdot)} X, x-z\rangle$ is a vector whose inner product with any vector $W$ is $\langle \nabla_W X, x-z\rangle$.
Direct calculation shows that $X(x, t)\doteqdot -\nabla \log \phi(x, t)$ satisfies the equation (\ref{eq-x-p2}) with $V(x, X)=\nabla q(x)$. We first prove Theorem \ref{max-p1}.

\begin{proof} Argue by contradiction. Assume that at $(x_0, y_0, t)$ with $t>0, x_0, y_0\in \Omega$, $\mathcal{C}$ attains a negative minimum on $\Omega \times \Omega \times (0, T]$. Following the notations from the proof to Theorem \ref{max-e}, the first variation consideration yields (\ref{1st-eq1}) and (\ref{1st-eq2}), which together imply that
\begin{equation}\label{1st-eq-dual}
\langle \nabla_{(\cdot)} X(y),  e_n\rangle =\langle \nabla_{(\cdot)} X(x), e_n\rangle.
\end{equation}
From now on, in the proof,  when the meaning is clear we omit $t$ variable dependence in  $X(x, t)$.
Now we compute
\begin{eqnarray*}
0&\ge& \left(\frac{\partial}{\partial t}-\sum_{j=1}^n \nabla^2_{E_j E_j}\right) \mathcal{C}(x, y, t)|_{(x_0, y_0, t)}\\
&=&t\langle V(y, X(y))-V(x, X(x)), \frac{y-x}{|y-x|}\rangle -2t\langle \nabla_{X(y)}X(y)-\nabla_{X(x)}X(x), \frac{y-x}{|y-x|}\rangle \\
&\,& -\langle \nabla_{e_n} X(y), y-z\rangle +\langle \nabla_{e_n} X(x), x-z\rangle -\langle X(y)-X(x), \frac{y-x}{|y-x|}\rangle\\
&\,& +2t(\psi_t -\psi'')+\langle X(y)-X(x), \frac{y-x}{|y-x|}\rangle+2\psi.
\end{eqnarray*}
Here the right hand side is evaluated at $(x_0, y_0)$ and we have used (\ref{eq-x-p2}). The first term is nonnegative by the convexity assumption on $V(x, X)$. As in the proof of Theorem \ref{max-e}, the equation (\ref{1st-eq1}) and (\ref{1st-eq2}) implies that the second term equals
\begin{equation}\label{help-thm61-1}
\frac{2t}{|y-x|}\sum_{i=1}^{n-1}\langle X(y)-X(x), e_i\rangle^2+2t\psi' \langle X(y)-X(x), e_n\rangle.
\end{equation}
Applying (\ref{1st-eq2}) again, at $(x_0, y_0, t)$, we have
\begin{equation}\label{help-thm61-2}
-\langle \nabla_{e_n} X(y), y-z\rangle +\langle \nabla_{e_n} X(x), x-z\rangle=\psi' |y-x|.
\end{equation}
Combining  the previous computation with (\ref{help-thm61-1}) and (\ref{help-thm61-2}) we have that,  at $(x_0, y_0, t)$,
\begin{eqnarray}
0&\ge& 2t\psi' \langle X(y)-X(x), e_n\rangle+\psi' |y-x|+2\psi +2t(\psi_t-\psi''). \label{almost1}
\end{eqnarray}
On the other hand $\psi$ satisfies (\ref{psi-inq}).
Plugging (\ref{psi-inq}) and  $s=\frac{|y-x|}{2}$ into it, (\ref{almost1}) then implies at $(x_0, y_0, t)$
\begin{eqnarray*}
0&\ge & 2t\psi' \langle X(y)-X(x), e_n\rangle +4t\psi' \psi\\
&=&2\psi' \mathcal{C}.
\end{eqnarray*}
This is a contradiction to $\mathcal{C}(x_0, y_0, t)<0$ and the fact that $\psi'<0$ (which follows from the log-concavity of $\frac{\bar{H}}{\bar{K}}$ and the strong maximum principle, noticing $x_0\ne y_0$).
\end{proof}

 Observe that $\phi(x, t)=\frac{H(z, x,t)}{K(z, x, t)}$ here also satisfies that $\phi(x, t)>0$ on $\Omega$, $\phi(x, t)=0$ on $\partial \Omega$ and the partial differential equation:
$$
\left(\frac{\partial}{\partial t} -\Delta\right)\phi =-q\phi +2\langle \nabla \phi, \nabla \log K\rangle.
$$
Hence the parabolic Hopf's lemma implies that $\frac{\partial \phi}{\partial \nu} <0$ on $\partial \Omega$. Therefore the same argument of Section 3 implies the estimates (\ref{1}) and (\ref{boundary-infty}) for $X(x, t)=-\nabla \log \phi(x, t)$ on the  points near the boundary.
Now  replacing $\psi$ with $\widetilde \psi(s, t)=\epsilon \psi(\epsilon s, \epsilon^2 t)$ with $\epsilon \in (0, 1)$, and  observing that the heat kernel asymptotics (cf. \cite{strook}, \cite{Neel})  imply that $\mathcal{C}(x, y, t)\ge 0$ holds  at $t=0$, Theorem \ref{max-p1} implies that $\mathcal{C}(x, y, t)\ge 0$. Letting $\epsilon \to 0$ we get a maximum principle proof for Brascamp-Lieb's log-concavity of $\frac{H}{K}$ and letting $\epsilon \to 1$ we get  Theorem \ref{AC-p1}. The general $\epsilon$ serves a natural interpolation between the strong and the weak result.

\begin{remark} By taking $t\to \infty$, since $e^{\lambda_0t}H(z, x, t)\to \phi_0(z) \phi_0(x)$, the estimate (\ref{AC-est-p3}) implies the improved log-concavity estimate (\ref{AC-est1}).
One can formulate a general maximum principle
for the case that $V(x, X)$ has a convexity module as in Theorem \ref{max-p}. Similarly,  Theorem \ref{AC-p1} can be generalized to the  case that $\nabla q$ has an expansion modulus $\omega(s)$ as in \cite{AC}. Without insisting $\psi'<0$ in Theorem \ref{max-p1}, the argument also proves that $\mathcal{C}(x, y, t)\ge 0$ is preserved by (\ref{eq-x-p2}).
\end{remark}

\section{Alternate proof to the fundamental gap theorem}

In \cite{AC}, by relating the fundamental gap to the exponential decay rate of the solution to a parabolic equation, the authors proved the following result.
\begin{theorem}[Andrews-Clutterbuck] \label{AC-gap} Let $\Omega$ be a strictly convex bounded domain in $\mathbb R^n$ with diameter $D$. Then the gap between the second eigenvalue $\lambda_1$ and the first $\lambda_0$ (for the operator $\mathcal{L}_q$ with $q$ being convex) satisfies:
\begin{equation}\label{gap1}
\lambda_1-\lambda_0\ge \frac{3 \pi^2}{D^2}.
\end{equation}
\end{theorem}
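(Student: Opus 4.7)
The plan is to compare the eigenfunction ratio $v:=\phi_1/\phi_0$ against a one-dimensional model via a modulus-of-continuity argument patterned after the proof of Theorem~\ref{max-e}. A direct computation shows $v$ extends smoothly to $\overline{\Omega}$ by the Hopf lemma applied to $\phi_0$ and $\phi_1$, and satisfies
\[
\Delta v - 2X\cdot\nabla v + \mu v =0, \qquad X=-\nabla\log\phi_0,\quad \mu=\lambda_1-\lambda_0,
\]
while the one-dimensional ratio $\widetilde v(s)=\sin(\pi s/D)$ on $[0,D/2]$ satisfies $\widetilde v''-2\widetilde X_D\widetilde v'+\mu_D^*\widetilde v=0$ with $\widetilde X_D(s)=\tfrac{\pi}{D}\tan(\pi s/D)$ and $\mu_D^*=3\pi^2/D^2$. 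The already-established (\ref{AC-est1}) says $X$ has expansion modulus at least $\widetilde X_D$, i.e.\ $\bigl(X(y)-X(x)\bigr)\cdot\tfrac{y-x}{|y-x|}\ge 2\widetilde X_D(|y-x|/2)$.

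I would argue by contradiction: assume $\mu<3\pi^2/D^2$. Pick $D_0\in(D,\sqrt{3}\pi/\sqrt{\mu})$ so that the 1D gap $\mu^*_{D_0}=3\pi^2/D_0^2$ at diameter $D_0$ still strictly exceeds $\mu$, and let $\widetilde v(s)=\sin(\pi s/D_0)$, $\widetilde X(s)=\tfrac{\pi}{D_0}\tan(\pi s/D_0)$. For a scaling parameter $c>0$ introduce
\[
\mathcal{Z}_c(x,y):=v(y)-v(x)-2c\,\widetilde v\bigl(|y-x|/2\bigr)
\]
on $\overline{\Omega}\times\overline{\Omega}$; for $c$ large $\mathcal{Z}_c\le 0$ everywhere, so let $c_*$ be the infimum of such $c$. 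Since $v$ is nonconstant (else $\phi_1$ would be a first eigenfunction) $c_*>0$, and $\sup\mathcal{Z}_{c_*}=0$ is attained at some $(x_0,y_0)\in\overline{\Omega}\times\overline{\Omega}$.

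The central step is the elliptic maximum principle at an interior off-diagonal maximum, carried out exactly as in Theorem~\ref{max-e}. With $e_n=(y_0-x_0)/|y_0-x_0|$ and $d=|y_0-x_0|\le D<D_0$, the first variation forces $\nabla v(y_0)=\nabla v(x_0)=c_*\widetilde v'(d/2)e_n$, and summing $\nabla^2_{E_iE_i}\mathcal{Z}_{c_*}\le 0$ against $E_i=e_i\oplus e_i$ ($i<n$) and $E_n=e_n\oplus(-e_n)$ yields $\Delta v(y_0)-\Delta v(x_0)-2c_*\widetilde v''(d/2)\le 0$. Substituting the equation for $v$, the identity $v(y_0)-v(x_0)=2c_*\widetilde v(d/2)$ valid at the maximum, the expansion modulus $(X(y_0)-X(x_0))\cdot e_n\ge 2\widetilde X_D(d/2)$, and the 1D identity $\widetilde v''=2\widetilde X\widetilde v'-\mu^*_{D_0}\widetilde v$, and then dividing by $2c_*$, one arrives at
\[
2\widetilde v'(d/2)\bigl[\widetilde X_D(d/2)-\widetilde X(d/2)\bigr] + (\mu^*_{D_0}-\mu)\,\widetilde v(d/2)\le 0.
\]
This is absurd: $D<D_0$ forces $\widetilde X_D>\widetilde X$ on $(0,D/2)$, $\mu^*_{D_0}>\mu$ by construction, and $\widetilde v,\widetilde v'>0$ at $d/2\in(0,D_0/2)$. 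Letting $D_0\searrow D$ then yields $\mu\ge 3\pi^2/D^2$.

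The hard part is ensuring $(x_0,y_0)$ genuinely sits in the interior of $\Omega\times\Omega$ with $x_0\ne y_0$, so that the computation above applies. The boundary portion is handled in the spirit of Section~3: the Hopf lemma bounds on $|\nabla\phi_0|$ and $|\nabla\phi_1|$ near $\partial\Omega$ yield smooth control of $v$ up to the boundary, and an analogue of (\ref{boundary-infty}) applied to the relative quantity $\mathcal{Z}_{c_*}$ should force $\mathcal{Z}_{c_*}<0$ on a full neighborhood of the boundary of $\Omega\times\Omega$, pushing the maximum inside. Ruling out a purely diagonal maximum is a short infinitesimal analysis: if $\mathcal{Z}_{c_*}=0$ were attained only on $\{x=y\}$, the first-order Taylor expansion would force $c_*=\sup_\Omega|\nabla v|/\widetilde v'(0)$, and a small perturbation (either shifting $D_0$ or replacing $\widetilde v$ by $\widetilde v(s)+\eta s$ for small $\eta>0$) would push the maximum off the diagonal and back into the regime where the main computation applies; one finally lets the perturbation vanish.
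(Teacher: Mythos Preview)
Your central computation --- the second-variation inequality at an interior off-diagonal maximum, yielding a contradiction from $\mu^*_{D_0}>\mu$ and $\widetilde X_D>\widetilde X$ --- is correct and is exactly the paper's argument. The difficulty is in the two ``hard parts'' you flag, and your handling of the boundary case has a genuine gap.

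You propose to exclude boundary maxima ``in the spirit of Section~3,'' invoking an analogue of~(\ref{boundary-infty}). But~(\ref{boundary-infty}) worked only because $X=-\nabla\log\phi_0$ blows up at $\partial\Omega$; the ratio $v=\phi_1/\phi_0$, by contrast, extends to a $C^2$ function on $\overline{\Omega}$ (as you note), so nothing forces $\mathcal{Z}_{c_*}$ strictly negative near $\partial(\Omega\times\Omega)$. The correct mechanism is one you never mention: $v$ satisfies the \emph{Neumann} condition $\partial v/\partial\nu=0$ on $\partial\Omega$ (this follows from $\phi_0=\phi_1=0$ and $\partial\phi_0/\partial\nu\neq 0$ there; see \cite{SWYY}). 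If the maximum were attained with, say, $y_0\in\partial\Omega$, then moving $y_0$ inward along $-\nu$ leaves $v(y_0)$ stationary to first order, while strict convexity of $\Omega$ gives $\langle -\nu_{y_0},\,(y_0-x_0)/|y_0-x_0|\rangle<0$, so $|y-x|$ decreases and hence $-2c_*\widetilde v(|y-x|/2)$ strictly increases (since $\widetilde v'>0$). This contradicts maximality. That is the paper's one-line boundary argument; no analogue of Section~3 is needed or available.

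For the diagonal, your perturbation sketch can be made to work, but the paper's device is much cleaner: replace $\widetilde v(|y-x|/2)$ by $\widetilde v(|y-x|/2+\epsilon)$ for small $\epsilon>0$ (taking $D_0>D+2\epsilon$). Then $\mathcal{Z}_c(x,x)=-2c\,\widetilde v(\epsilon)<0$, so the zero maximum is automatically off the diagonal; at the end one lets $\epsilon\to 0$ and then $D_0\to D$. This single shift also keeps $\widetilde v'>0$ on the relevant range, which is what the boundary argument needs.
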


Here we shall give an alternate argument without appealing to the parabolic equation. First recall (cf. \cite{SWYY}) that $w=\frac{\phi_1}{\phi_0}$ with $\phi_1$ being the eigenfunction corresponding to $\lambda_1$, is a $C^2(\overline{\Omega})$ function,  satisfying
\begin{equation}\label{eq-q}
\Delta w=-(\lambda_1-\lambda_0)w+2\langle \nabla w, X\rangle, \mbox{ in } \Omega\quad \mbox{ and } \frac{\partial w}{\partial\nu}=0, \mbox{ on } \partial \Omega.
\end{equation}
Here $X(x)=-\nabla \log \phi_0$.
Correspondingly, let $\mu_1=\frac{4\pi^2}{D^2}$ and $\mu_0=\frac{\pi^2}{D^2}$ be the first and second (Dirichlet) eigenvalues for the operator $\frac{d^2}{ds^2}$ on $[-\frac{D}{2}, \frac{D}{2}]$, and let $\bar{w}=\frac{\bar{\phi}_1}{\bar{\phi}_0}$ with $\bar{\phi}_1=\sin\left(\frac{2\pi}{D}s\right)$ and $\bar{\phi}_0=\cos\left(\frac{\pi}{D}s\right)$ being the eigenfunctions respectively. Note $\bar{w}$ satisfies (\ref{eq-q}) for $n=1$. We allow $D$ being replaced by $D'>D$ and denote the corresponding function by $\bar{\phi}_0^{D'}$. Clearly $\mu_1-\mu_0$ depends on $D'$ continuously and decreases as $D'$ increases.

Since $w$ is not a constant, $\bar{w}(s)$ is comparable with $s$ on $[0, \frac{D'}{2}]$, by the continuity it is easy to see that given small $\epsilon$ with $\frac{D'-D}{2}> \epsilon>0$ one may find positive constant $C$  such that
$$
\mathcal{O}(x, y)\doteqdot w(y)-w(x)-C\bar{w}\left(\frac{|y-x|}{2}+\epsilon\right)
$$
attains its maximum $0$ somewhere in $\overline{\Omega}\times \overline{\Omega}$. The strict convexity of $\Omega$,  the Neumann boundary condition on $w$ and the positivity of $\bar{w}'$ on $[0,\frac{D}{2}+\epsilon]$ rule out the possibility that the maximum is attained on $\partial\left(\Omega\times \Omega\right)$. So the maximum is attained for some interior point $(x_0, y_0)$. Clearly $x_0\ne y_0$ and $\bar{w}\ne 0$ at $(x_0, y_0)$. As \cite{AC} we pick a normal frame $\{e_i\}$ such that $e_n=\frac{y-x}{|y-x|}$ near $x$ and parallel translate them to $y$. Let $E_i=e_i\oplus e_i$ and $E_n=e_n\oplus (-e_n)$. The first variation of $\mathcal{O}(x,y)$ at $(x_0, y_0)$ asserts that
\begin{eqnarray}
&\,&\nabla_{e_i} w(y)=\nabla_{e_i} w(x)=0, \label{1stvar-1}\\
&\,& \nabla_{e_n} w(y)=\frac{C\bar{w}'}{2} =\nabla_{e_n} w(x)\label{1stvar-2}
\end{eqnarray}
which together imply
\begin{equation}\label{1stvar}
\nabla w(y)=\nabla w(x)=\frac{C\bar{w}'}{2} \frac{y-x}{|y-x|}.
\end{equation}
Now the second derivative test asserts that $\sum_{i=1}^n \nabla^2_{E_j E_j}\mathcal{O}(x, y) \le 0$, with
\begin{eqnarray}
\sum_{i=1}^{n-1}\nabla^2_{E_i E_i} \mathcal{O}(x, y)&=& \sum_{i=1}^{n-1}\nabla^2_{e_i e_i}w(y)-\sum_{i=1}^{n-1}\nabla^2_{e_i e_i}w(x),\label{2ndvar-1}\\
\nabla^2_{E_n E_n} \mathcal{O}(x, y) &=& \nabla^2_{e_n e_n}w(y)-\nabla^2_{e_n e_n} w(x) -C\bar{w}''. \label{2ndvar-2}
\end{eqnarray}
 Putting (\ref{2ndvar-1}), (\ref{2ndvar-2}), (\ref{1stvar}) and (\ref{eq-q}) together we have that at $(x_0, y_0)$,
 \begin{eqnarray*}
 0&\ge& \sum_{i=1}^n \nabla^2_{E_j E_j}\mathcal{O}(x, y)\\
 &=& -(\lambda_1-\lambda_0)(w(y)-w(x))+(C \bar{w}')\langle X(y)-X(x), \frac{y-x}{|y-x|}\rangle -C\bar{w}'' \\
 &\ge& -(\lambda_1-\lambda_0)(w(y)-w(x))+2(C \bar{w}')\left(-\log \bar{\phi}_0^{D'}\right)'-C \bar{w}''\\
 &\ge&-(\lambda_1-\lambda_0)(w(y)-w(x))+(\mu_1-\mu_0)C \bar{w}.
 \end{eqnarray*}
 Here we have used (\ref{AC-est1}), $\bar{w}'\ge0$ and $-(\log\bar{\phi}^{D'}_0)'$ is a decreasing function of $D'$. This implies that $\lambda_1-\lambda_0\ge \mu_1-\mu_0$ for any $D'>D$. Let $D'\to D$, we get the result.

 We remark that in the proof above $\phi_1$ can be replaced by any eigenfunction $\phi_i$ with $i\ge 1$. But we made use that $\frac{\bar{\phi}_1}{\bar{\phi}_0}>0$ for $s\in [ \epsilon, \frac{D'}{2}]$.

    We refer the interested readers to \cite{AC} for the motivation, history and comprehensive literatures on previous works related to Theorem \ref{AC-gap}.

 \section{Further applications of the improved log-concavity estimate}

 First the argument of the last section effectively proves the following result on a lower bound of the second Neumann eigenvalue for the operator$\Delta -2\langle \nabla (\cdot) , X\rangle$ (with nonconstant eigenfunction).

\begin{corollary}\label{Neumann} (i) If $X$ has an expansion modulus given by $-(\log \bar{\phi}_0)'$, then the second Neumann eigenvalue $\widetilde{\lambda}_1$
of the operator $\Delta -2\langle \nabla (\cdot) , X\rangle$ is bounded from below by $\mu_1-\mu_0$;

(ii) If $X$ is merely convex, or more generally $X$ has $\epsilon' \frac{\bar{w}}{\bar{w}'}$ as its expansion modulus for  $\epsilon'> -\frac{\mu_0}{2}$,  then $\widetilde{\lambda}_1\ge 2\epsilon'+ \mu_0$. The convexity of $X$ amounts to $\epsilon=0$.

Both results still hold for $\Omega$ being a convex domain in a Riemannian manifold with nonnegative Ricci curvature, or for any compact Riemannian manifold (without boundary)  with nonnegative Ricci curvature.
 \end{corollary}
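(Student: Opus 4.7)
The plan is to adapt the maximum-principle argument from the preceding section, replacing the ratio $w = \phi_1/\phi_0$ by an arbitrary non-constant Neumann eigenfunction $u$ of $\Delta - 2\langle\nabla\cdot, X\rangle$ corresponding to $\widetilde{\lambda}_1$. This $u$ satisfies $\Delta u - 2\langle\nabla u, X\rangle = -\widetilde{\lambda}_1 u$ in $\Omega$ with $\partial u/\partial\nu = 0$ on $\partial\Omega$. I select a one-dimensional comparison function $\bar{w}$ on $[-D'/2, D'/2]$ with $D' > D$ (later sent to $D$): for (i), $\bar{w}(s) = \sin(2\pi s/D')/\cos(\pi s/D')$, the same choice as before, satisfying $\bar{w}'' + 2(\log\bar{\phi}_0^{D'})'\bar{w}' = -(\mu_1 - \mu_0)\bar{w}$; for (ii), $\bar{w}(s) = \sin(\pi s/D')$, satisfying $\bar{w}'' - 2\epsilon'\bar{w} = -(2\epsilon' + \mu_0')\bar{w}$ with $\mu_0' = (\pi/D')^2$, where the hypothesis $\epsilon' > -\mu_0/2$ ensures the target eigenvalue is positive and $\bar{w}' > 0$ on the relevant interval. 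A direct check shows in each case that the expansion modulus of the 1D drift coincides with $\epsilon'\bar{w}/\bar{w}'$, matching the hypothesis on $X$.

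Next I form $\mathcal{O}(x, y) = u(y) - u(x) - C\bar{w}(|y-x|/2 + \epsilon)$ on $\overline{\Omega} \times \overline{\Omega}$ with $0 < \epsilon < (D' - D)/2$, choosing $C > 0$ so that $\max \mathcal{O} = 0$. The Neumann condition on $u$, strict convexity of $\Omega$, and $\bar{w}' > 0$ rule out maxima on $\partial(\Omega \times \Omega)$ by the same outward-derivative computation as in the preceding section, while $\bar{w}(\epsilon) > 0$ excludes the diagonal. At an interior maximum $(x_0, y_0)$ with $x_0 \ne y_0$, the first variation in a parallel frame with $e_n = (y_0 - x_0)/|y_0 - x_0|$ yields $\nabla u(y) = \nabla u(x) = (C\bar{w}'/2)e_n$, and the second-derivative test along $E_i = e_i \oplus e_i$ for $i < n$ and $E_n = e_n \oplus (-e_n)$, combined with the PDE for $u$, the expansion-modulus inequality $\langle X(y) - X(x), e_n\rangle \ge 2\omega(|y-x|/2)$, and the ODE for $\bar{w}$, gives at $(x_0, y_0)$
$$0 \ge C\bigl(\widetilde{\mu}_1' - \widetilde{\lambda}_1\bigr)\,\bar{w}\bigl(|y_0 - x_0|/2 + \epsilon\bigr) + R(\epsilon, D' - D),$$
where $\widetilde{\mu}_1' = \mu_1^{D'} - \mu_0^{D'}$ in case (i), $\widetilde{\mu}_1' = 2\epsilon' + \mu_0'$ in case (ii), and $R \to 0$ as $\epsilon \to 0$, $D' \to D$ (this error arises only from the $\epsilon$-shift between the argument of $\bar{w}$ and that of $\omega$). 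Since $\bar{w} > 0$ at this point, passing to the limits forces $\widetilde{\lambda}_1 \ge \mu_1 - \mu_0$ in case (i) and $\widetilde{\lambda}_1 \ge 2\epsilon' + \mu_0$ in case (ii).

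For the Riemannian extension, I replace $|y-x|$ by the geodesic distance $r(x, y)$ and parallel-transport the frame along a minimizing geodesic from $x_0$ to $y_0$. The first-variation formula is unchanged; the second variation picks up an additional index-form contribution of the type $-\int_0^d \operatorname{Ric}(\gamma', \gamma')\,ds$ in the transversal directions, and the nonnegativity of Ricci makes this term favorable for the inequality. The boundary exclusion on a convex domain uses the second fundamental form of $\partial\Omega$ in place of the Euclidean inner product; the closed-manifold case has no boundary contribution. Cut-locus issues are handled by the standard upper-barrier trick. The main technical obstacle is confirming that summing the second variations along $E_i$ and $E_n$ produces the Ricci term with the correct sign, i.e., the Riemannian analogue of the computation in Theorem~\ref{max-e}; this follows from the Bochner-type comparison on transversal Jacobi fields underlying the Andrews--Clutterbuck framework.
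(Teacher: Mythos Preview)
Your proposal is correct and follows essentially the same route as the paper. The paper's own justification is terse: part (i) is declared ``obvious'' from the Section~5 argument, part (ii) is reduced to the single observation $\bar w''=-\mu_0\bar w$ with $\bar w=2\sin(\pi s/D)$, and the Riemannian extension is dispatched by noting that the two-point second variation of $r(x,y)$ is nonpositive under nonnegative Ricci while $\bar w'\ge 0$; your write-up simply spells out these same steps, and your explicit error term $R(\epsilon,D'-D)$ is in fact a more careful treatment of the $\epsilon$-shift than the Section~5 presentation (the paper handles it the same way you do only later, in the proof of Corollary~\ref{Li-Li}).
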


Recall that $\mu_0=\left(\frac{\pi}{D}\right)^2$ and $\bar{\omega}=2\sin(\frac{\pi}{D}s)$. Part (i) is obvious. For the proof of the second statement in Corollary \ref{Neumann}, letting $w$ be the first non-trivial eigenfunction in the argument of the last section, it suffices to observe that $\bar{w}''=-\mu_0 \bar{w}$. The part (ii)  generalizes  an earlier  result of Payne-Weinberger \cite{PW} which asserts the same statement for $X(x)\equiv 0$. Note that it even  applies to  the case that $\epsilon<0$.
One candidate of the vector field $X$ satisfying the assumption of the part (i) is  $-\nabla \log \widetilde{\phi}_0$ with $\widetilde{\phi}_0$ being  the first eigenfunction of some domain $\Omega'$ containing $\Omega$, but with the same diameter. For the last statement, after some obvious modifications on the definition of the modulus of expansion and replacing $|y-x|$ by $r(x, y)$ (the distance function), it suffices to observe that the second variation (without fixing either end) of the distance function $r(x, y)$ is non-positive while $\bar{w}'\ge 0$. Hence the proof goes without any changes. Note that on a compact Riemannian manifold any convex vector field $X$ (being convex is equivalent to that $\langle \nabla_W X, W\rangle \ge 0$) must be parallel. Hence statement here generalizes a corresponding  result of Li-Yau \cite{LY} and Zhong-Yang \cite{ZY} for convex domains in a Riemannian manifold (or for a compact Riemannian manifold when $X$ is non-convex). This result can also be derived from Theorem 2.1 of \cite{AC} by the consideration in Section 3 of that paper.

Secondly we consider the case that $\Omega$ be a compact manifold with $\operatorname{Ric}\ge n-1$ or a bounded convex domain in such a Riemannian manifold.  The argument in the last section can yield an interpolating  estimate on $\tilde{\lambda}_1$.  First recall the following lemma which may be well known for experts.

\begin{lemma} Assume that $x, y\in M$ with $\operatorname{Ric}\ge n-1$. Let $\gamma(s)$ be a minimizing geodesic joining $x$ and $y$. Let $\{e_i\}$ be a orthonormal frame at $x$ and parallel translate it along $\gamma(s)$ with $e_n=\gamma'(s)$. Then for $x, y\in M$, with distance $r(x, y)< \pi$,
\begin{equation}\label{dis-var}
\sum_{i=1}^{n-1} \nabla^2_{E_i E_i} r(x, y) \le -2(n-1)\frac{\sin \left(\frac{r(x, y)}{2}\right)}{\cos \left(\frac{r(x, y)}{2}\right)}.
\end{equation}
\end{lemma}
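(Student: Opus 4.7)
The plan is to apply the second variation of arc length to a carefully chosen test variation, which is the standard route for Hessian-of-distance estimates under a Ricci lower bound. The estimate is sharp, saturated by the round sphere, so the test field will essentially be the Jacobi field of the model.

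First I would set up the Hessian comparison. For $1\le i\le n-1$, let $x(u)=\exp_x(u\,e_i)$ and $y(u)=\exp_y(u\,e_i)$, the latter using the parallel-translated frame at $y$. Then by definition $\nabla^2_{E_iE_i}r(x,y)=\frac{d^2}{du^2}\big|_{u=0}r(x(u),y(u))$. For any smooth variation $\beta_u$ of curves from $x(u)$ to $y(u)$ with $\beta_0=\gamma$, one has $r(x(u),y(u))\le L(\beta_u)$; at $u=0$ the two sides agree, and their first derivatives agree because $e_i\perp\gamma'$ at both endpoints (by parallel transport), so that the first variation of length vanishes. Hence $\nabla^2_{E_iE_i}r(x,y)\le L''(0)$ for any such admissible variation.

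Next I would choose the comparison variation
$$
\beta_u(s)=\exp_{\gamma(s)}\bigl(u\,f(s)\,e_i(s)\bigr),
$$
where $e_i(s)$ is the parallel transport of $e_i$ along $\gamma$ and $f$ is a scalar with $f(0)=f(r)=1$. Then $V_i(s)=f(s)e_i(s)$ is perpendicular to $\gamma'$, with $V_i'=f'(s)e_i(s)$ also perpendicular to $\gamma'$, and $\nabla_{V_i}V_i=0$ since $u\mapsto\beta_u(s)$ is itself a geodesic. The second variation formula for arc length therefore gives
$$
L''(0)=\int_0^r\bigl(f'(s)^2-R(e_i,\gamma',e_i,\gamma')f(s)^2\bigr)\,ds,
$$
with no boundary terms. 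Summing over $i=1,\dots,n-1$ recognizes the Ricci tensor in the curvature term, and the hypothesis $\operatorname{Ric}(\gamma',\gamma')\ge n-1$ yields
$$
\sum_{i=1}^{n-1}\nabla^2_{E_iE_i}r(x,y)\le (n-1)\int_0^r\bigl(f'(s)^2-f(s)^2\bigr)\,ds.
$$

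Finally I would optimize the right-hand side. The Euler--Lagrange equation is $f''+f=0$, whose unique solution with $f(0)=f(r)=1$ is $f(s)=\cos(s-r/2)/\cos(r/2)$, well defined because $r<\pi$. A short routine computation evaluates $\int_0^r(f'^2-f^2)\,ds$ to $-2\tan(r/2)$, which produces exactly the claimed bound $-2(n-1)\sin(r/2)/\cos(r/2)$. I do not expect a serious obstacle here: the only delicate points are the first-order matching that legitimizes the Hessian comparison (handled by perpendicularity of $e_i$ to $\gamma'$ after parallel transport) and ensuring $f$ is well-defined and positive on $[0,r]$, which is guaranteed by the hypothesis $r<\pi$. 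The sharpness of the constant is automatic because $f$ is the exact minimizer of the index form subject to the prescribed boundary conditions, and equality in the Ricci step occurs on the sphere.
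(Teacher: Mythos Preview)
Your proof is correct and follows essentially the same approach as the paper: both use the variation $\exp_{\gamma(s)}(u\,f(s)\,e_i(s))$ as an upper barrier for $r(x,y)$ and apply the second variation formula, with your $f(s)=\cos(s-r/2)/\cos(r/2)$ being exactly the coefficient of $e_i$ in the paper's $V_i$ (up to a sign typo there; note $\frac{1-\cos d}{\sin d}=\tan(d/2)$). The paper phrases the conclusion in the barrier/support sense rather than via matching first derivatives, which is the cleaner way to handle possible nonsmoothness of $r$, but the content is the same.
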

\begin{proof} Since the distance function $r(x, y)$ may not be smooth, the estimate is understood in the sense of support. Let $\gamma_i(s, \eta)=\exp_{\gamma(s)} (\eta V_i(s))$ for $i=1, \cdots, n-1$ with $V_i=\left(\cos(s)+\frac{\cos d-1}{\sin d}\sin s \right)e_i(s)$. Here we denote $r(x, y)$ by $d$. Since $\frac{D \gamma}{\partial \eta}(0, \eta)=e_i(0), \frac{D \gamma}{\partial \eta}(d, \eta)=e_i(d)$ and $r(\gamma(0, \eta), \gamma(d, \eta)\le L(\gamma(s, \eta))$, the arc-length of $\gamma(\cdot, \eta)$,    the second variation formula implies the following differential inequality in the barrier sense,
\begin{eqnarray*}
\nabla^2_{E_i E_i} r(x, y)&\le&\left. \frac{d^2}{ d\eta^2} \int_0^d \left|\frac{D \gamma}{\partial s}\right|(s, \eta)\, \, ds\right|_{\eta=0}\\
&=& \int_0^d \left(|V_i'|^2 -\langle R( V_i, \gamma') V_i, \gamma'\rangle\right)\, ds.
\end{eqnarray*}
The lemma follows by summing the above for $i=1$ to $n-1$, plugging in the assumption $\operatorname{Ric}\ge n-1$,  and elementary identities.
\end{proof}

The argument in the last section then shows the following result regarding the second Neumann eigenvalue of $\Delta-2\langle \nabla (\cdot) , X\rangle$.
\begin{corollary}\label{Li-Li} Let $\Omega$ be a compact manifold, or a convex domain in  a Riemannian manifold, with $\operatorname{Ric}\ge (n-1)K$. Assume that the diameter $D<\frac{\pi}{\sqrt{K}}$, and that $X$ has a modulus of expansion $\epsilon' \frac{\bar{w}}{\bar{w}'}$. Then for any $D'\in( D, \frac{\pi}{\sqrt{K}}]$,
$$\widetilde{\lambda}_1\ge 2\epsilon' + (n-1)\frac{\pi}{D'} \inf_{0\le r\le D}\frac{\tan (\sqrt{K}\frac r 2)}{\tan (\frac{\pi}{D'} \frac r 2)} +\left(\frac{\pi}{D'}\right)^2.$$
\end{corollary}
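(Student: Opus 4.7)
The strategy mirrors the alternate fundamental-gap argument of Section 5, with the preceding Ricci-comparison lemma replacing the trivial flat-space computation of the tangential Hessian of $|y-x|$. Let $w$ be a non-constant Neumann eigenfunction of $\Delta - 2\langle\nabla(\cdot),X\rangle$ with eigenvalue $\widetilde\lambda_1$, fix $D'\in(D,\pi/\sqrt K]$, and set $\bar w(s)=2\sin(\pi s/D')$, so that $\bar w''=-(\pi/D')^2\bar w$ and $\bar w'>0$ on $[0,D'/2]$. For a small $\epsilon>0$ I would consider
$$\mathcal{O}(x,y)=w(y)-w(x)-C\,\bar w\!\left(\tfrac{r(x,y)}{2}+\epsilon\right)$$
on $\overline\Omega\times\overline\Omega$, and pick $C>0$ minimal with $\max\mathcal{O}=0$. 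Exactly as in Section 5, the convexity of $\Omega$ together with the Neumann condition on $w$ (or compactness in the boundary-less case), the positivity of $\bar w'$, and the non-constancy of $w$ force this maximum to be attained at an interior point $(x_0,y_0)$ with $x_0\ne y_0$; moreover $r_0 := r(x_0,y_0)<D'\le\pi/\sqrt K$, so $y_0$ lies outside the cut locus of $x_0$ and $r$ is smooth at $(x_0,y_0)$.

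Let $\gamma$ be the minimizing geodesic from $x_0$ to $y_0$, let $e_n=\gamma'$, and complete $\{e_1,\dots,e_{n-1}\}$ to a parallel orthonormal frame along $\gamma$; set $E_i=e_i\oplus e_i$ for $i<n$ and $E_n=e_n\oplus(-e_n)$. The first variation of $\mathcal{O}$ yields, as in Section 5, $\nabla w(x_0)=\nabla w(y_0)=\tfrac{C}{2}\bar w'(r_0/2+\epsilon)\,e_n$. For the second-variation inequality $\sum_j\nabla^2_{E_j E_j}\mathcal{O}\le 0$ I would then assemble three ingredients: (i) the eigenvalue identity for $w$ combined with the first-variation information yields $\Delta w(y_0)-\Delta w(x_0)=-\widetilde\lambda_1(w(y_0)-w(x_0))+C\bar w'\langle X(y_0)-X(x_0),e_n\rangle$; (ii) the Riemannian expansion-modulus hypothesis (\ref{con-1}) on $X$ gives $\langle X(y_0)-X(x_0),e_n\rangle\ge 2\epsilon'(\bar w/\bar w')(r_0/2)$; (iii) the preceding lemma, rescaled to the curvature bound $\operatorname{Ric}\ge(n-1)K$, gives $\sum_{i=1}^{n-1}\nabla^2_{E_i E_i}r\le -2(n-1)\sqrt K\tan(\sqrt K r_0/2)$, while the $E_n$-component contributes only $-C\bar w''(r_0/2+\epsilon)=C(\pi/D')^2\bar w(r_0/2+\epsilon)$ since $\gamma$ being a geodesic forces $\nabla^2_{E_n E_n}r=0$.

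Combining these, dividing by $C\bar w(r_0/2+\epsilon)=w(y_0)-w(x_0)>0$, and letting $\epsilon\to 0$ (so that $\bar w'(r_0/2+\epsilon)(\bar w/\bar w')(r_0/2)/\bar w(r_0/2+\epsilon)\to 1$ and $(\bar w'/\bar w)(r_0/2)=(\pi/D')\cot(\pi r_0/(2D'))$) gives
$$\widetilde\lambda_1\ge 2\epsilon'+(n-1)\frac{\pi}{D'}\cdot\frac{\sqrt K\,\tan(\sqrt K r_0/2)}{\tan(\pi r_0/(2D'))}+\left(\frac{\pi}{D'}\right)^2.$$
Since $r_0\in(0,D]$ is not known a priori, passing to $\inf_{0<r\le D}$ yields the claimed bound. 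The only step requiring real care is extending the preceding lemma to curvature bound $(n-1)K$: one either rescales the metric by $K$, or redoes the Jacobi-field variation with the model function $\cos(\sqrt K s)+\tfrac{\cos(\sqrt K d)-1}{\sin(\sqrt K d)}\sin(\sqrt K s)$, both routes producing the factor $\sqrt K\tan(\sqrt K r/2)$ used above. The handling of the Riemannian expansion-modulus hypothesis along a unique minimizing geodesic is guaranteed by $r_0<\pi/\sqrt K$.
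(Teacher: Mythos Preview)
Your argument is correct and follows the paper's own proof closely: run the Section~5 maximum-principle argument on $\mathcal{O}(x,y)=w(y)-w(x)-C\bar w(r/2+\epsilon)$, but replace the flat tangential identity (\ref{2ndvar-1}) by the inequality coming from the Ricci-comparison lemma, then let $\epsilon\to0$ and pass to the infimum over $r_0\in(0,D]$. The one quibble is your claim that $r_0<\pi/\sqrt K$ forces $y_0$ to lie outside the cut locus of $x_0$; a Ricci lower bound does not control cut points (think of $\mathbb{RP}^n$), but this is harmless since the lemma is already stated and proved in the barrier sense, and that is all the second-derivative test requires once you note that $\bar w'>0$ makes $w(y)-w(x)-C\bar w(\tilde r/2+\epsilon)$ touch $\mathcal{O}$ from below at $(x_0,y_0)$.
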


We should remark that a similar, seemingly more geometrically formulated result, but with $X=0$, was obtained by Andrews and Clutterbuck as described in \cite{A}. The formulation here is a bit simple-minded. Nevertheless it gives an explicit lower bound, and taking $D'=\pi$, the result contains the Lichnerowicz's $\widetilde{\lambda}_1\ge n$ (so does the formulation of Andrews-Clutterbuck), and when $K=0$ it recovers Li-Yau, Zhong-Yang's estimate. Hence it addresses a conjecture of P. Li \cite{Ling}.

\begin{proof} It suffices to prove for $K=1$. The only difference is on (\ref{2ndvar-1}), which now becomes
$$
\sum_{i=1}^{n-1}\nabla^2_{E_i E_i} \mathcal{O}(x, y)\ge \sum_{i=1}^{n-1}\nabla^2_{e_i e_i}w(y)-\sum_{i=1}^{n-1}\nabla^2_{e_i e_i}w(x)+(n-1) C\bar{w}' \tan \left(\frac{r(x, y)}{2}\right).
$$
Then the result of argument in the alternate proof of Theorem \ref{AC-gap} of the previous section  shows that
$$
\widetilde{\lambda}_1\ge 2\epsilon'+\left(\frac{\pi}{D'}\right)^2+(n-1)\frac{\pi}{D'}\frac{\tan(\frac{r_0}{2})}{\tan(\frac{\pi}{D'}\frac{r_0+\epsilon}{2})}
$$
with $r_0=r(x_0, y_0)$. Taking $\epsilon \to 0$ the claimed result then follows.\end{proof}

 The estimate (\ref{AC-est1}) has  another application on the lower estimate of $\lambda_0$, the first (Dirichlet) eigenvalue of the operator $\mathcal{L}_q$.

 \begin{corollary}\label{lambda0} Assume that $\Omega$ is a bounded convex domain in $\mathbb R^n$ with diameter $D$. Assume that $q(x)$ is convex.
Then
\begin{equation}
\lambda_0\ge n\left(\frac{\pi}{D}\right)^2 +\inf_{x\in \Omega} q(x).
\end{equation}
It then implies that the second eigenvalue has the lower bound estimate:
\begin{equation}
\lambda_1\ge (n+3)\left(\frac{\pi}{D}\right)^2 +\inf_{x\in \Omega} q(x).
\end{equation}
 \end{corollary}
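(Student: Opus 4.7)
The plan is to convert the global estimate (\ref{AC-est1}) into a pointwise lower bound for $\operatorname{div}(-\nabla \log \phi_0)$, and then evaluate a Riccati-type rewriting of the eigenvalue equation at the single interior point where $\nabla \phi_0$ vanishes, so that the quadratic term $|\nabla \log \phi_0|^2$ disappears.

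Set $X = -\nabla \log \phi_0$. A direct computation of $\Delta \phi_0/\phi_0$ rewrites the equation $\Delta \phi_0 = (q-\lambda_0)\phi_0$ as the pointwise identity
$$\lambda_0 = q(x) + \operatorname{div}(X)(x) - |X(x)|^2.$$
Since $\phi_0>0$ in $\Omega$ and vanishes on $\partial \Omega$, the function $\phi_0$ attains its maximum at some interior point $x_0\in\Omega$, at which $\nabla \phi_0(x_0)=0$ and hence $X(x_0)=0$. Evaluating the identity at $x_0$ yields
$$\lambda_0 = q(x_0)+\operatorname{div}(X)(x_0) \ge \inf_\Omega q + \operatorname{div}(X)(x_0),$$
so the task reduces to proving $\operatorname{div}(X)(x_0)\ge n(\pi/D)^2$.

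For this I would substitute $y=x_0+tv$ into (\ref{AC-est1}) for an arbitrary unit vector $v$ and let $t\to 0^+$. Taylor-expanding the left-hand side as $t\langle \nabla_v X(x_0),v\rangle + O(t^2)$ and the right-hand side as $(\pi^2/D^2)\,t + O(t^3)$, and dividing by $t$, one obtains the infinitesimal inequality $\langle \nabla_v X(x_0),v\rangle \ge \pi^2/D^2$. Since $\nabla X = -\nabla^2 \log \phi_0$ is symmetric, summing this over an orthonormal basis gives $\operatorname{div}(X)(x_0)\ge n\pi^2/D^2$, which together with the previous paragraph proves the first inequality. The bound on $\lambda_1$ follows immediately by combining with the fundamental gap estimate $\lambda_1-\lambda_0\ge 3\pi^2/D^2$ from Theorem \ref{AC-gap}. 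There is no real obstacle here: the essential observation is the choice of evaluation point $x_0$, which is precisely the place where the otherwise awkward $-|X|^2$ term vanishes and the infinitesimal form of (\ref{AC-est1}) can be summed directly into a trace bound.
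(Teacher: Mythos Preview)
Your argument is correct and is essentially the same as the paper's: both evaluate the identity $\lambda_0 = q + \operatorname{div}X - |X|^2$ at the interior maximum $x_0$ of $\phi_0$ (where $|X|^2$ drops out) and then extract the trace bound $\operatorname{div}X(x_0)\ge n\pi^2/D^2$ from the infinitesimal form of (\ref{AC-est1}). The only cosmetic difference is that the paper obtains this trace bound by integrating (\ref{AC-est1}) over $\partial B_{x_0}(r)$, applying the divergence theorem, and sending $r\to 0$, whereas you Taylor-expand in a single direction $v$ and then sum over an orthonormal basis; these are equivalent ways of passing from the two-point inequality to the pointwise Hessian bound, and your version is arguably the more streamlined presentation.
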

 \begin{proof} Since $\phi_0=0$ on $\partial \Omega$ and $\phi_0>0$, it must attain its maximum for some $x_0\in \Omega$. For $r$ small integrate the estimate (\ref{AC-est1}) over the $\partial B_{x_0}(r)$:
 \begin{eqnarray*}
 \omega_{n-1} r^{n-1}2\frac{\pi}{D}\tan \left(\frac{\pi r}{2D}\right) &\le &\int_{\partial B_{x_0}(r)} \left(X(y)-X(x_0)\right)\cdot \nu \, dA(y)\\
 &=&\int_{B_{x_0}(r)}\operatorname{div} X(y)\, d\mu(y)\\
 &=& \lambda_0 \frac{\omega_{n-1}}{n} r^n +\int_{B_{x_0}(r)} \left(|\nabla \log \phi_0|^2 -q\right)\, d\mu(y).
 \end{eqnarray*}
 Here $\omega_{n-1}$ is the area of the $\partial B_0(1)$ and recall that $X=-\nabla \log \phi_0$. Hence we have that
 \begin{eqnarray*}
 \lambda_0 &\ge& \frac{2n\pi}{r D} \tan \left(\frac{\pi r}{2D}\right)+\aint_{B_{x_0}(r)}\left(q-|\nabla \log \phi_0|^2 \right)\, d\mu(y).
 \end{eqnarray*}
 Taking $r\to 0$ in the right hand side above we get the desired result, since $\nabla\phi_0(x_0)=0$ and
 $$
 \lim_{r\to 0}\frac{2n\pi}{r D} \tan \left(\frac{\pi r}{2D}\right)=\frac{n\pi^2}{D^2}.
 $$
 \end{proof}

 Using the isodiametric inequality, Corollary \ref{lambda0} implies that
 $$
 \lambda_0\ge n \frac{\pi^2}{4}\left(\frac{\alpha(n)}{|\Omega|}\right)^{2/n}+\inf_{\Omega} q, \quad \lambda_1 \ge (n+3)\frac{\pi^2}{4}\left(\frac{\alpha(n)}{|\Omega|}\right)^{2/n}+\inf_{\Omega} q.
 $$
 Here $\alpha(n)$ is the volume of the unit ball in $\mathbb R^n$. Note that for $v=0$, the Polya's conjecture  asserts the lower bound $\lambda_{j}\ge 4\pi^2 \left(\frac{j+1}{\alpha(n) |\Omega|}\right)^{2/n}$. The estimate for $j=0, 1$ given above are better than the conjectured lower bounds,  however since it was motivated by Weyl's asymptotics, the main interesting cases are for  big $j$.

Using a similar argument as in the proof of Corollary \ref{lambda0}, (\ref{BL-imp}) implies the following sharp upper bound on the growth rate of $H(x, y, t)$.

\begin{corollary}\label{con-heat} Assume that $\Omega$ is convex and $q(x)$ is convex. Let $H(z, x, t)$ be the Dirichlet heat kernel for $\frac{\partial}{\partial t}-\mathcal{L}_q$ with potential function $q$. For any fixed $z\in \Omega$, let $m(z, t)\doteqdot \max_{x\in \Omega} H(z, x, t)$. Then
$$
\frac{d}{dt}\log  m(z, t)\le n \frac{d}{dt} \log \bar{H}(0, t)-\inf q.
$$
\end{corollary}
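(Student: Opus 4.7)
The plan is to mirror the proof of Corollary \ref{lambda0} with $\phi_0$ replaced by the slice $x\mapsto H(z,x,t)$ and (\ref{AC-est1}) replaced by its parabolic refinement (\ref{BL-imp}). Fix $z\in\Omega$ and $t>0$, and pick $x_0=x_0(t)\in\Omega$ where $H(z,\cdot,t)$ attains its maximum. Since $H$ vanishes on $\partial\Omega$ and is positive inside, the point $x_0$ lies in the interior and $\nabla_xH(z,x_0,t)=0$. Setting $X(x,t)\doteqdot -\nabla_x\log H(z,x,t)$, we have $X(x_0,t)=0$; specializing (\ref{BL-imp}) to $x=x_0$ yields
\begin{equation*}
X(y,t)\cdot\frac{y-x_0}{|y-x_0|}\ge -2(\log\bar{H})'\!\left(\frac{|y-x_0|}{2},t\right).
\end{equation*}

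Next, exactly as in the proof of Corollary \ref{lambda0}, I would integrate over $\partial B_{x_0}(r)$ for small $r>0$ and apply the divergence theorem to obtain
\begin{equation*}
\int_{B_{x_0}(r)}\operatorname{div} X(y,t)\,d\mu(y)\ge -2\omega_{n-1}r^{n-1}(\log\bar{H})'\!\left(\frac{r}{2},t\right).
\end{equation*}
The heat equation $\partial_t H=\Delta H-qH$ gives the identity $\operatorname{div} X=-\partial_t\log H-q+|\nabla\log H|^2$. Dividing the displayed inequality by $|B_{x_0}(r)|=\omega_{n-1}r^n/n$ and letting $r\to 0$, using $|\nabla\log H|(z,x_0,t)=0$, I would arrive at
\begin{equation*}
-\partial_t\log H(z,x_0,t)-q(x_0)\ge -n\,\frac{d}{dt}\log\bar{H}(0,t).
\end{equation*}
The right hand side comes from the expansion $(\log\bar{H})'(s,t)=s(\log\bar{H})_t(0,t)+O(s^2)$ as $s\to 0$: $\bar{H}(\cdot,t)$ is even in $s$, so $(\log\bar{H})'(0,t)=0$, and the 1D heat equation at $s=0$ forces $(\log\bar{H})''(0,t)=(\log\bar{H})_t(0,t)$. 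Using $q(x_0)\ge\inf_\Omega q$ produces the pointwise bound
\begin{equation*}
\partial_t\log H(z,x_0(t),t)\le n\,\frac{d}{dt}\log\bar{H}(0,t)-\inf_\Omega q.
\end{equation*}

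The remaining issue, which I expect to be the only genuine obstacle, is promoting this bound at the moving maximizer $x_0(t)$ to the same bound on $\frac{d}{dt}\log m(z,t)$: $x_0(t)$ need not be unique or smooth in $t$, and $m(z,\cdot)$ is only Lipschitz in general. A standard envelope-type argument handles this. For each $t_0>0$, the smooth function $t\mapsto\log H(z,x_0(t_0),t)$ agrees with $\log m(z,t)$ at $t=t_0$ and lies below it for $t\ne t_0$, so the upper left Dini derivative $D^-\log m(z,\cdot)|_{t_0}$ is dominated by $\partial_t\log H(z,x_0(t_0),t_0)$, which by the pointwise inequality above is at most $n\frac{d}{dt}\log\bar{H}(0,t_0)-\inf_\Omega q$. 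Since this holds at every $t_0>0$, the continuous function $\log m(z,t)-n\log\bar{H}(0,t)+t\inf_\Omega q$ has non-positive upper left Dini derivative and is therefore non-increasing, which is the claimed inequality.
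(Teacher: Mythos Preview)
Your proof is correct and follows essentially the same approach as the paper's: both use the envelope argument (touching $\log m$ from below by $t\mapsto\log H(z,x_0,t)$) to control the left Dini derivative, and both use (\ref{BL-imp}) at the interior maximizer to bound $\Delta\log H(z,x_0,t)$ by $n(\log\bar H)''(0,t)=n(\log\bar H)_t(0,t)$. The only cosmetic difference is that you route the spatial estimate through the divergence theorem over $B_{x_0}(r)$ (as in Corollary~\ref{lambda0}) and then let $r\to 0$, whereas the paper takes the pointwise limit directly; since your averaging recovers exactly $\operatorname{div}X(x_0)$ in the limit, the two arguments are equivalent.
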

\begin{proof} Since $m(y, t)$ may not be smooth in general, the derivative is understood as the Dini derivative from the left. Since $H(z,x, t)$ takes the $0$ value on the boundary, it attains its maximum interior. Let $x(t)$ be such a point where  $m(z, t)$ is attained. Then
\begin{eqnarray*}
\frac{d}{dt} \log m(z, t)&\le & \lim_{h\to 0}\frac{\log H(z, x(t), t)-\log H(z, x(t), t-h)}{h}\\
&\le & \frac{\Delta_{x} H(z, x(t), t)}{H(z, x(t), t)}-\inf q\\
&=& \Delta \log H(z, x(t), t)-\inf q\\
&\le & n\lim_{s\to 0}\frac{(\log \bar{H}(s, t))'}{s}-\inf q\\
&\le & n \left(\log \bar{H}(0, t)\right)''-\inf q\\
&=&  n \left(\log \bar{H}\right)_t (0, t)-\inf q.
\end{eqnarray*}
Here in the third equation $\nabla H(z, x(t), t)=0$ is used;   in line 4 estimate (\ref{BL-imp}) is used; in the last line the fact that $\bar{H}'(0, t)=0$ is used.
\end{proof}

Note that Corollary \ref{con-heat} implies Corollary \ref{lambda0} since the decay rate of $H(z, x, t)$ is $e^{-\lambda_0 t}$ and the decay rate of $\bar{H}(0, t)$ is $e^{-\mu_0t}$.

\section*{Acknowledgments.} {  }
We thank Ben Andrews for the communications regarding Corollary 4.4 of \cite{AC} and his interests, Peter Li for conversations regarding his conjecture, Bruce Driver for discussions. The graduate course taught by the author at UCSD in Spring 2011 is the main motivation of the proofs in Sections 3 and 5. We also thank Alexander Grigoryan for reference \cite{strook}.

\bibliographystyle{amsalpha}

\end{document}